\newtheorem{thm}{Theorem}[section]
\newtheorem{lem}[thm]{Lemma}
\newtheorem{cor}[thm]{Corollary}
\newtheorem{prop}[thm]{Proposition}
\theoremstyle{definition}
\newtheorem{defn}[thm]{Definition}
\newtheorem{example}[thm]{Example}
\theoremstyle{remark}
\newtheorem{rem}[thm]{Remark}
\numberwithin{equation}{section}
\newcommand{\nd }{\noindent}
\newcommand{\sm}{\setminus}
\newcommand{\zs}{\{ 0\} }
\newcommand{\bzs}{\{ \zero\} }
\newcommand{\zero}{{\bf{0}}}
\newcommand{\R}{{\bf R}}
\newcommand{\Q}{{\bf Q}}
\newcommand{\Z}{{\bf Z}}
\newcommand{\e}{{\bf e}}
\newcommand{\Zn}{{\Z _{\geq 0}}}
\newcommand{\Rn}{{\R _{\geq 0}}}
\newcommand{\Qn}{{\Q _{\geq 0}}}
\newcommand{\Zp}{{\Z _{>0}}}
\newcommand{\Rp}{{\R _{>0}}}
\newcommand{\x}{\boldsymbol{x}}
\newcommand{\y}{\boldsymbol{y}}
\newcommand{\w}{{\bf w}}
\newcommand{\kx}{k[\x ]}
\newcommand{\kxx}{k[\x ^{\pm 1}]}
\newcommand{\kxy}{k[\x ,\y]}
\newcommand{\cW }{\mathcal{W}}
\newcommand{\supp }{\mathop{\rm supp}\nolimits}
\newcommand{\rin }{\mathop{\rm in}\nolimits}
\newcommand{\rinp }{\mathop{\rm in}_{\preceq}\nolimits}
\newcommand{\rinpp }{\mathop{\rm in}_{\preceq '}\nolimits}
\newcommand{\degp }{\mathop{\rm deg}_{\preceq}\nolimits}
\newcommand{\degpp }{\mathop{\rm deg}_{\preceq '}\nolimits}
\newcommand{\id}{{\rm id}}
\newcommand{\p}{{\preceq }}
\def\mbi#1{\boldsymbol{#1}}
\newcommand{\ba}{{\mbi a}}
\newcommand{\bb}{{\mbi b}}
\newcommand{\bc}{{\mbi c}}
\newcommand{\bi}{{\mbi i}}
\newcommand{\bj}{{\mbi j}}
\begin{document}

\title[Polynomial subalgebras without finite SAGBI bases]{
A new class of finitely generated polynomial subalgebras without finite SAGBI bases}

\author{Shigeru Kuroda}
\address{Department of Mathematical Sciences\\
Tokyo Metropolitan University\\
1-1 Minami-Osawa, Hachioji, Tokyo, 192-0397, Japan}
\email{kuroda@tmu.ac.jp}
\thanks{This work is partly supported by JSPS KAKENHI
Grant Number 18K03219}

\subjclass[2020]{Primary 13E15, Secondary 13P10}

\date{}

\begin{abstract}
The notion of initial ideal for an ideal of a polynomial ring appears in the theory of Gr\"obner basis. Similarly to the initial ideals, we can define the initial algebra for a subalgebra of a polynomial ring, or more generally of a Laurent polynomial ring, which is used in the theory of SAGBI (Subalgebra Analogue to Gr\"obner Bases for Ideals) basis. The initial algebra of a finitely generated subalgebra is not always finitely generated, and no general criterion for finite generation is known. 

The aim of this paper is to present a new class of finitely generated subalgebras having non-finitely generated initial algebras. The class contains a subalgebra for which the set of initial algebras is continuum, as well as a subalgebra with finitely many distinct initial algebras. 
\end{abstract}

\maketitle

\section{Introduction}
\setcounter{equation}{0}

Let $k$ be a field, 
$\kxx =k[x_1,\ldots ,x_n,x_1^{-1},\ldots ,x_n^{-1}]$ 
the Laurent polynomial ring in $n$ variables over $k$, 
and $\Omega $ the set of total orders on $\Z ^n$ 
such that 
$\ba \preceq \bb $ implies 
$\ba +\bc \preceq \bb +\bc $ 
for each $\ba ,\bb ,\bc \in \Z ^n$. 
We fix $\p \in \Omega $. 
For $f=\sum _{\ba \in \Z ^n}u_{\ba }\x ^{\ba }\in \kxx $ 
with $u_{\ba }\in k$, 
we define 
$\supp f:=\{ \ba \in \Z ^n\mid u_{\ba }\ne 0\} $, 
and $\degp f:=\max _{\p }(\supp f)$ if $f\ne 0$, 
where $\x ^{\ba }:=x_1^{a_1}\cdots x_n^{a_n}$ 
for $\ba =(a_1,\ldots ,a_n)\in \Z ^n$. 
Then, we define 
$$
\degp V:=\{ \degp f\mid f\in V\sm \zs \} 
\quad\text{and}\quad 
\rinp V:=\sum _{\ba \in \degp V}k\x ^{\ba } 
$$
for each $V\subset \kxx $. 
If $A$ is a $k$-subalgebra of $\kxx $, 
then $\degp A$ is an additive submonoid of $\Z ^n$. 
Hence, 
$\rinp A$ is a $k$-subalgebra of $\kxx $, 
which we call the {\it initial algebra} of $A$. 
The notion of initial algebra and similar notions 
sometimes appear in the solution of deep problems in polynomial rings 
(cf.~\cite{Sugaku}).

Now, let $A$ be a $k$-subalgebra of $\kx :=k[x_1,\ldots ,x_n]$, 
and $\p $ a {\it monomial order} on $\kx $, 
i.e., an element of $\Omega $ 
with $\min _{\p }((\Zn )^n)=\zero $, 
where $\Zn $ denotes the set of nonnegative integers. 
Then, 
$S\subset A\sm \zs $ is called a {\it SAGBI basis} of $A$ 
if $\{ \rinp f\mid f\in S\} $ generates the $k$-algebra $\rinp A$. 
This notion was introduced by Robbiano-Sweedler~\cite{RS} 
as a Subalgebra Analogue to Gr\"obner Bases for Ideals (see also \cite{KM}). 
By definition, 
$A$ has a finite SAGBI basis if and only if 
the $k$-algebra $\rinp A$ is finitely generated, 
or equivalently, 
the monoid $\degp A$ is finitely generated. 
If this is the case, 
a theory similar to Gr\"obner basis theory works 
for SAGBI bases. 
However, 
even if $A$ is finitely generated, 
$\rinp A$ is not necessarily finitely generated. 
It is an important problem to find good criteria 
which guarantee finite generation for $\rinp A$, 
as proposed in \cite[Chap.\ 11]{Sturmfels}. 
For this purpose, 
it will be helpful to study the mechanism 
of non-finite generation of the initial algebras.

The best known result in this direction 
is the following: 
For a subgroup $G$ of the symmetric group $S_n$, 
consider the invariant ring 
\begin{equation}\label{eq:kx^G}
\kx ^G=\{ f\in \kx \mid 
f(x_{\sigma (1)},\ldots ,x_{\sigma (n)})=f(x_1,\ldots ,x_n)
\quad (\forall \sigma \in G)\} . 
\end{equation}
Then, 
$\rinp \kx ^G$ is finitely generated 
if and only if the group $G$ is generated by transpositions. 
This result is due to 
G\"obel~\cite{Gob} when $\preceq $ is the lexicographic order, 
and Kuroda~\cite{KurodaRIMS00}, \cite{Kuroda02} (see also \cite{JvdK}), 
Reichstein~\cite{Reich03} (see also \cite{Reich09}) and 
Thi\'{e}ry-Thomass\'{e}~\cite{TT} for general $\p $. 
Moreover, 
$\# \{ \rinp \kx ^G\mid \p \in \Omega _0\} $ 
is equal to $|G|$ 
if $G$ is generated by transpositions, 
and $\# \R $ otherwise, 
where $\Omega _0$ is the set of monomial orders on $\kx $ 
(cf.~ Kuroda's papers cited above, 
Tesemma~\cite{Tesemma} and Anderson\ et al.~\cite{Anderson}). 
This result is interesting because 
$\# \{ \rinp I\mid \p \in \Omega _0\} <\infty $ 
if $I$ is an ideal of $\kx $ 
(cf.~\cite{Schwartz}). 
We mention that 
\cite{KurodaRIMS00}, \cite{Kuroda02}, 
\cite{Reich03}, \cite{Reich09}, 
\cite{Tesemma} and \cite{Anderson} 
treated more general classes of invariant rings 
than (\ref{eq:kx^G}).

In this paper, 
we present a new construction of finitely generated 
$k$-subalgebras $\mathscr{A}$ of $\kxx $ 
for which $\{ \p \in \Omega \mid 
\rinp \mathscr{A}\text{ is not finitely generated}\} \ne \emptyset $. 
The construction is very general, 
and we obtain a large class of 
$k$-subalgebras with this property. 
The class contains a $k$-subalgebra $\mathscr{A}$ 
with $\# \{ \rinp \mathscr{A}\mid \p \in \Omega _0\} =\# \R $, 
as well as $\mathscr{A}$ with 
$\# \{ \rinp \mathscr{A}\mid \p \in \Omega \} <\infty $. 
Except for a class of invariant rings mentioned above, 
no such class of $k$-subalgebras are previously found. 
The technique we use to show the non-finite generation of 
initial algebras is different from that used for 
the invariant rings. 
In fact, 
as $k$-vector spaces, 
the invariant rings have $k$-bases 
with nice properties (cf.\ e.g.~\cite[Lem.\ 2.4]{Kuroda02}), 
but our $k$-subalgebras have no such structures in general.

This paper is organized as follows. 
In Section \ref{sect:main}, 
we describe the construction and state the main theorem. 
In Section \ref{sect:Prel}, 
we show some preliminary results, 
which are used in 
Section~\ref{sect:proof} to prove the main theorem. 
In Section~\ref{sect:example}, 
we give some examples, 
and shortly discuss the cardinality of the set of initial algebras. 

\medskip 

\nd {\bf Notation}\quad 
We define 
$\supp V:=\bigcup _{f\in V\sm \zs }\supp f$ for each $V\subset \kxx $, 
$K_{\ge 0}:=\{ a\in K\mid a\ge 0\} $ and $K_{>0}:=\{ a\in K\mid a>0\} $ 
for each $K\subset \R $, 
and 
$$
K_1\ba _1+\cdots +K_s\ba _s:=\{ \lambda _1\ba _1+\cdots +\lambda _s\ba _s\mid 
\lambda _1\in K_1,\ldots ,\lambda _s\in K_s\} 
$$
for each $K_i\subset \R $ and $\ba _i\in \R ^n$. 
The coordinate unit vectors of $\R ^n$ 
are denoted by $\e _1,\ldots ,\e _n$. 
For each $\p \in \Omega $ and $f\in \kxx \sm \zs $, 
we define $\rinp f:=u_{\ba }\x ^{\ba }$, 
where $\ba :=\degp f$ 
and $u_{\ba }\in k^*$ is the coefficient of 
$\x ^{\ba }$ in $f$. 
For $\p \in \Omega $ and $\ba ,\bb \in \Z ^n$, 
we write $\ba \prec \bb $ if $\ba \preceq \bb $ and $\ba \ne \bb $.

\section{Construction and main result}\label{sect:main}
\setcounter{equation}{0}

We begin with a lemma about commutative algebras. 
Let $k$ be a field, 
$B$ a commutative $k$-algebra, 
and $J$ an ideal of $B$. 
Assume that 
$$
B=k[\{ a_{i,j}\mid i=1,\ldots ,r,\ j=1,\ldots ,N\} \cup U], 
$$
where $r\ge 2$ and $N\ge 1$, 
$a_{i,j}$'s are elements of $B$ such that $a_{i_1,j}a_{i_2,j}\in J$ 
for each $1\le i_1<i_2\le r$ and $j=1,\ldots ,N$, 
and $U$ is a finite subset of $J$. 
We define 
$$
A:=k[\{ a_{1,j}+\cdots +a_{r,j}\mid 
j=1,\ldots ,N\} ]+J. 
$$

\begin{lem}\label{lem:basic}
\nd{\rm (i)} 
$B$ is integral over $A$. 

\nd{\rm (ii)} 
The $k$-subalgebra $A$ of $B$ is finitely generated. 
\end{lem}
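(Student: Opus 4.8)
The plan is to treat the two parts in order, using the elements $b_j := a_{1,j} + \cdots + a_{r,j} \in A$ as the key objects. First I would note that, since $J$ is an ideal of $B$ and $U \subseteq J$, the set $A = k[b_1,\ldots,b_N] + J$ is genuinely a $k$-subalgebra of $B$ (closure under multiplication holds because products of elements of $k[b_1,\ldots,b_N]$ with $J$, and of $J$ with itself, land in $J$), and that $B$ is generated as an $A$-algebra by the elements $a_{i,j}$ alone, the finite set $U$ already lying in $J \subseteq A$.

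For part (i), I would show that each generator $a_{i,j}$ is integral over $A$ by exhibiting an explicit monic relation. Fixing $j$ and forming the polynomial
\[
P_j(t) := \prod_{i=1}^{r}(t - a_{i,j}) = t^r - e_1 t^{r-1} + e_2 t^{r-2} - \cdots + (-1)^r e_r \in B[t],
\]
where $e_1,\ldots,e_r$ are the elementary symmetric functions of $a_{1,j},\ldots,a_{r,j}$, the point is that all coefficients lie in $A$: one has $e_1 = b_j \in A$, while for $m \ge 2$ every monomial of $e_m$ is a product of at least two distinct $a_{i,j}$, hence lies in $J$ by hypothesis, so $e_m \in J \subseteq A$. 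Thus $P_j \in A[t]$ is monic with $P_j(a_{i,j}) = 0$ for every $i$, which shows $a_{i,j}$ is integral over $A$. Since the elements of $B$ integral over $A$ form a subring of $B$ containing $A$ together with all the $a_{i,j}$, and $B$ is generated over $A$ by these, $B$ is integral over $A$.

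For part (ii), I would combine integrality with finiteness of the generating set: because $B = A[\{a_{i,j}\}]$ with each $a_{i,j}$ integral over $A$, the algebra $B$ is finitely generated as an $A$-module. Now $k$ is Noetherian, $B$ is a finitely generated $k$-algebra, and $B$ is module-finite over $A$, so the Artin--Tate lemma yields that $A$ is a finitely generated $k$-algebra.

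The step I expect to require the most care is part (ii): the ideal $J$ need not be finitely generated, and indeed $B$ need not be Noetherian, so one cannot simply produce generators of $A$ by hand from $b_1,\ldots,b_N$ together with generators of $J$. The Artin--Tate lemma is exactly the tool that circumvents this, extracting finite generation of the intermediate algebra $A$ from the module-finiteness of $B$ over $A$ without any Noetherian hypothesis on $B$ itself.
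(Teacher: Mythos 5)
Your proposal is correct and follows essentially the same route as the paper: the same monic polynomial $\prod_{i=1}^r(t-a_{i,j})$ with coefficients in $A$ because $e_1=b_j\in A$ and the higher elementary symmetric functions land in the ideal $J\subset A$, followed by the Artin--Tate lemma (the paper cites \cite[Prop.\ 7.8]{AM}) for finite generation of $A$. The only extra care worth noting is that for $m\ge 2$ the containment $e_m\in J$ uses that $J$ is an ideal of $B$, not just the hypothesis on pairs, which you do implicitly acknowledge.
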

\begin{proof}
For $j=1,\ldots ,N$, 
the monic polynomial 
$(x-a_{1,j})\cdots (x-a_{r,j})$ 
belongs to $A[x]$, 
since $a_{1,j}+\cdots +a_{r,j}\in A$, 
and $a_{i_1,j}a_{i_2,j}B\subset J\subset A$ for each $1\le i_1<i_2\le r$. 
Hence, 
$a_{1,j},\ldots ,a_{r,j}$ are integral over $A$. 
Clearly, 
elements of $U\subset J\subset A$ are integral over $A$, 
proving (i). 
Since the $k$-algebra $B$ is finitely generated, 
(ii) follows from (i) 
(cf.~\cite[Prop.\ 7.8]{AM}). 
\end{proof}

Now, 
let $C\subset \R ^n$ be a {\it convex cone}, i.e., 
a nonempty subset such that 
$\alpha \ba +\beta \bb \in C$ 
holds for all $\alpha ,\beta \ge 0$ and $\ba ,\bb \in C$. 
We call $F\subset C$ a {\it face} of $C$ 
if there exists $\omega \in \R ^n$ such that 
$F=\{ \ba \in C\mid \omega \cdot \ba =0\} $ and 
$\omega \cdot \ba \ge 0$ for all $\ba \in C$, 
where $\cdot $ denotes the standard inner product. 
We call this $\omega $ a {\it normal vector} of $F$. 
Take faces $C_1,\ldots ,C_r$ of $C$ 
such that $C_i\not\subset C_j$ if $i\ne j$, 
where $r\ge 2$. 
Then, we define 
\begin{equation}\label{eq:C^circ}
\begin{aligned}
C^{\circ }&:=C\sm \bigcup _{i=1}^rC_i
=\{ \ba \in C\mid \omega _i\cdot \ba >0\ (i=1,\ldots ,r)\} ,\\
C_i^{\circ }&:=C_i\sm \bigcup _{j\ne i}C_j
=\{ \ba \in C\mid \omega _i\cdot \ba =0,\ 
\omega _j\cdot \ba >0\ (\forall j\ne i)\} 
\end{aligned}
\end{equation}
for $i=1,\ldots ,r$, 
where $\omega _i$ is a normal vector of $C_i$. 
By definition, 
$C^{\circ }$, $C_1^{\circ },\ldots ,C_r^{\circ }$ 
are pairwise disjoint. 
We can check that 

\smallskip 

\nd (C1) $\ba \in C$ and $\bb \in C^{\circ }$ 
imply $\ba +\bb \in C^{\circ }$; 

\smallskip

\nd (C2) 
$\ba \in C_i^{\circ }$, 
$\bb \in C_j^{\circ }$ 
and $i\ne j$ 
imply $\ba +\bb \in C^{\circ }$.

\begin{example}\label{ex:basic}\rm 
$C:=(\Rn )^2$ is a convex cone, 
and $C_i:=\Rn \e _i$ for $i=1,2$ are faces of $C$. 
In this case, 
we have $C^{\circ }=(\Rp )^2$ 
and $C_i^{\circ }:=\Rp \e _i$ for $i=1,2$.

{\unitlength=0.5cm
\begin{picture}(7,8)

\put(-0.2,2.2){
\multiput(4.2,0)(0,0.2){10}{\multiput(0,0.2)(0.2,0){24}{\circle*{0.1}}}
\multiput(4.2,3)(0,0.2){10}{\multiput(0,0.2)(0.2,0){24}{\circle*{0.1}}}
\multiput(4.2,0)(0,0.2){25}{\multiput(0,0.2)(0.2,0){10}{\circle*{0.1}}}
\multiput(7.2,0)(0,0.2){25}{\multiput(0,0.2)(0.2,0){10}{\circle*{0.1}}}

\put(4,0){\line(0,1){5}}
\put(4,0){\line(1,0){5}}
\put(4,0){\circle*{0.3}}

\put(3,0.1){\line(0,1){5}}
\put(3,0){\circle*{0.3}}
\put(1.9,2){$C_2$}

\put(4.1,-1){\line(1,0){5}}
\put(4,-1){\circle*{0.3}}
\put(6,-1.9){$C_1$}
\put(6.3,2.3){$C$}

\put(13,0){
\multiput(4.2,0)(0,0.2){10}{\multiput(0,0.2)(0.2,0){24}{\circle*{0.1}}}
\multiput(4.2,3)(0,0.2){10}{\multiput(0,0.2)(0.2,0){24}{\circle*{0.1}}}
\multiput(4.2,0)(0,0.2){25}{\multiput(0,0.2)(0.2,0){9}{\circle*{0.1}}}
\multiput(7.2,0)(0,0.2){25}{\multiput(0,0.2)(0.2,0){10}{\circle*{0.1}}}

\multiput(4,0.2)(0,0.5){10}{\line(0,1){0.2}}
\multiput(4.2,0)(0.5,0){10}{\line(1,0){0.2}}
\put(4,0){\circle{0.3}}

\put(3,0.1){\line(0,1){5}}
\put(3,0){\circle{0.3}}
\put(1.8,2){$C_2^{\circ }$}

\put(4.1,-1){\line(1,0){5}}
\put(4,-1){\circle{0.3}}
\put(6,-1.9){$C_1^{\circ }$}

\put(6.1,2.3){$C^{\circ }$}
}
}
\end{picture}}

\end{example}

We use the following objects to construct our $k$-algebra: 

\smallskip 

\nd (A1) 
A finitely generated $k$-domain $R$ and 
an ideal $I\ne \zs $ of $R$ 
such that $R=k+I$. 

\smallskip 

\nd (A2) 
Injective homomorphisms 
$\phi _1,\ldots ,\phi _r:R\to \kxx $ of $k$-algebras 
such that $\supp \phi _i(I)\subset C_i^{\circ }$ 
for $i=1,\ldots ,r$. 

\smallskip 

\nd (A3) 
A finite subset $U\subset \kxx $ such that $\supp U\subset C^{\circ }$ 
($U$ may be empty). 

\smallskip 

By (A1), 
there exist $N\ge 1$ and $r_1,\ldots ,r_N\in I\sm \zs $ 
such that 
$$
R=k[r_1,\ldots ,r_N]\quad\text{and}\quad 
I=\sum _{(i_1,\ldots ,i_N)\ne (0,\ldots ,0)}
kr_1^{i_1}\cdots r_N^{i_N}. 
$$
Moreover, 
since $\zero \not\in \supp \phi _1(I)$ by (A2), 
we see that $R=k\oplus I$. 
We define 
\begin{align*}
\mathscr{B}&:=k[\phi _1(I)\cup \cdots \cup \phi _r(I)\cup U]
=k[\{ \phi _i(r_j)\mid i=1,\ldots ,r,\ j=1,\ldots ,N\} \cup U], 
\end{align*}
and $J$ to be the ideal of $\mathscr{B}$ 
generated by 
$\bigcup _{1\le i_1<i_2\le r}\phi _{i_1}(I)\phi _{i_2}(I)\cup U$, 
or by 
\begin{equation}\label{eq:gen of J}
\{ 
\phi _{i_1}(r_{j_1})
\phi _{i_2}(r_{j_2})
\mid 
1\le i_1<i_2\le r,\ 
j_1,j_2\in \{ 1,\ldots ,N\} \} \cup U. 
\end{equation}
We define a $k$-linear map by 
$\phi :=\phi _1+\cdots +\phi _r:R\to \kxx $, 
and set 
\begin{equation}\label{eq:scrA}
\mathscr{A}:=k[\phi (r_1),\ldots ,\phi (r_N)]+J. 
\end{equation}
Then, 
we get the following theorem by Lemma~\ref{lem:basic} (ii).

\begin{thm}\label{thm:A f.g}
The $k$-algebra $\mathscr{A}$ is finitely generated. 
\end{thm}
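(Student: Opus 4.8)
The plan is to deduce the statement directly from Lemma~\ref{lem:basic}(ii) by matching the present construction to the abstract setup of that lemma. First I would set $B:=\mathscr{B}$ and take as the distinguished elements $a_{i,j}:=\phi _i(r_j)$ for $i=1,\ldots ,r$ and $j=1,\ldots ,N$, together with the finite set $U$ from (A3). The defining equality $\mathscr{B}=k[\{ \phi _i(r_j)\mid i=1,\ldots ,r,\ j=1,\ldots ,N\} \cup U]$ is exactly the hypothesis $B=k[\{ a_{i,j}\} \cup U]$ of the lemma, and the standing assumptions $r\ge 2$ and $N\ge 1$ carry over verbatim.

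Next I would verify the two remaining hypotheses. For the mixed products, $a_{i_1,j}a_{i_2,j}=\phi _{i_1}(r_j)\phi _{i_2}(r_j)$ occurs among the generators of $J$ listed in (\ref{eq:gen of J}) (take $j_1=j_2=j$), hence lies in $J$; this gives $a_{i_1,j}a_{i_2,j}\in J$ for all $1\le i_1<i_2\le r$ and all $j$. The set $U$ is finite by (A3) and is contained in $J$ by construction, so $U$ is a finite subset of $J$ as required.

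It then remains to identify the subalgebra $A$ produced by the lemma with $\mathscr{A}$. Since $\phi =\phi _1+\cdots +\phi _r$ as a $k$-linear map, for each $j$ we have $a_{1,j}+\cdots +a_{r,j}=\phi _1(r_j)+\cdots +\phi _r(r_j)=\phi (r_j)$, so
$$
A=k[\{ a_{1,j}+\cdots +a_{r,j}\mid j=1,\ldots ,N\} ]+J=k[\phi (r_1),\ldots ,\phi (r_N)]+J=\mathscr{A}.
$$
Applying Lemma~\ref{lem:basic}(ii) to this $A$ then yields at once that $\mathscr{A}$ is a finitely generated $k$-algebra.

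There is essentially no hard step here, since the theorem is designed as a direct specialization of Lemma~\ref{lem:basic}; the only point worth a moment's care is purely bookkeeping. One must check that the lemma is genuinely applicable, i.e.\ that $\mathscr{B}$ is itself a finitely generated $k$-algebra, which the proof of part (ii) uses in order to upgrade the integrality of part (i) to finite generation. This holds automatically because the generating set $\{ \phi _i(r_j)\mid i,j\} \cup U$ is finite ($rN$ elements together with the finite set $U$), so no additional argument is needed.
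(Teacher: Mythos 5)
Your proposal is correct and is exactly the argument the paper intends: the paper derives Theorem~\ref{thm:A f.g} directly from Lemma~\ref{lem:basic}~(ii) by the very specialization you spell out ($B:=\mathscr{B}$, $a_{i,j}:=\phi_i(r_j)$, the products $\phi_{i_1}(r_j)\phi_{i_2}(r_j)$ lying among the generators~(\ref{eq:gen of J}) of $J$, and $A=k[\phi(r_1),\ldots,\phi(r_N)]+J=\mathscr{A}$). Your bookkeeping check that $\mathscr{B}$ is finitely generated is the right point to flag, and it holds as you say.
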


Note that, 
for each $p,q\in I$, 
we have 
\begin{align*}
\phi (p)\phi(q)
&=(\phi _1(p)+\cdots +\phi _r(p))
(\phi _1(q)+\cdots +\phi _r(q)) \\
&\equiv \phi _1(p)\phi _1(q)+\cdots +\phi _r(p)\phi _r(q)
=\phi (pq)\pmod{J}. 
\end{align*}
Hence, we can write (\ref{eq:scrA}) as 
\begin{equation}\label{eq:decomp}
\mathscr{A}
=k+\sum _{(i_1,\ldots ,i_N)\ne (0,\ldots ,0)}
k\phi (r_1^{i_1}\cdots r_N^{i_N})+J
=k+\phi (I)+J. 
\end{equation}
Therefore, 
$\mathscr{A}$ is also defined 
independently of the choice of $r_1,\ldots ,r_N$.

The following theorem is the main result of this paper. 

\begin{thm}\label{thm:main}
Let $\p \in \Omega $ be such that 
$\min _{\p }(\degp \phi _i(R))=\zero $ for some $1\le i\le r$. 
Then, $\degp \mathscr{A}$ is not finitely generated. 
\end{thm}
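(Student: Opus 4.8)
The plan is to show that the additive submonoid $\degp\mathscr{A}\subseteq\Z^n$ admits no finite generating set, by producing an infinite family of indecomposable elements. Throughout I fix the index $1$ as the one satisfying $\min_{\p}(\degp\phi_1(R))=\zero$.

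\smallskip
\noindent\textbf{Step 1 (support geometry and reduction).} First I would record where $\mathscr{A}$ lives. By (A2), $\supp\phi_i(I)\subseteq C_i^{\circ}$, and by (A3), $\supp U\subseteq C^{\circ}$. Since $C$ is a cone containing $\zero$ and closed under addition, $\supp\mathscr{B}\subseteq C$, so by (C1) every product of a generator of $J$ (whose support lies in $C^{\circ}$ by (C2) and (A3)) with an element of $\mathscr{B}$ has support in $C^{\circ}$; hence $\supp J\subseteq C^{\circ}$. Together with $\supp\phi(I)\subseteq C_1^{\circ}\cup\cdots\cup C_r^{\circ}$ and \eqref{eq:decomp}, this gives the disjoint decomposition $\supp\mathscr{A}\subseteq\{\zero\}\cup C_1^{\circ}\cup\cdots\cup C_r^{\circ}\cup C^{\circ}$. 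Consequently each $M_i:=\degp\mathscr{A}\cap(\{\zero\}\cup C_i^{\circ})$ is a submonoid with $M_i\subseteq\degp\phi_i(R)$, and by (C1),(C2) no element of $C_i^{\circ}$ can be split across two distinct regions, so any decomposition in $\degp\mathscr{A}$ of an element of $M_i$ uses only elements of $M_i$.

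\smallskip
\noindent\textbf{Step 2 (the family).} Using $r\ge2$, fix $s\in I\setminus\{0\}$, set $\bb:=\degp\phi_1(s)\in C_1^{\circ}$ and $\ba:=\degp\phi_i(s)\in C_i^{\circ}$ for a suitable $i\ne1$. Because $\phi_i(s)\phi_1(s^m)$ is an off‑diagonal product it lies in $J$, and $\degp(\phi_i(s)\phi_1(s^m))=\ba+m\bb$; thus $\bm_m:=\ba+m\bb\in\degp J\subseteq\degp\mathscr{A}$ for all $m\ge1$. The hypothesis $\min_{\p}(\degp\phi_1(R))=\zero$ forces $\bb\succ\zero$, so the $\bm_m$ are pairwise distinct.

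\smallskip
\noindent\textbf{Step 3 (indecomposability via a normal vector).} Suppose $\degp\mathscr{A}=\langle G\rangle$ with $G$ finite. I would use the normal vector $\omega_1$ of $C_1$ as a valuation: since $\omega_1\cdot\bb=0$, the value $\omega_1\cdot\bm_m=\omega_1\cdot\ba$ is a \emph{positive constant}. As $\omega_1\cdot\bc\ge0$ for all $\bc\in\degp\mathscr{A}$, with equality exactly on $C_1^{\circ}$, only boundedly many generators with $\omega_1\cdot\bg>0$ can appear in any expression $\bm_m=\sum n_{\bg}\bg$; the rest lie in the submonoid generated by $G\cap C_1^{\circ}\subseteq M_1$. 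Writing $\bm_m=\bp_m+\bq_m$ accordingly, the $\bp_m$ take only finitely many values, so by pigeonhole $\bp_m=\bp^{*}$ for infinitely many $m$, whence $m\bb=\bq_m-(\ba-\bp^{*})$ with $\bq_m\in M_1$. It therefore suffices to establish the \emph{blocking statement}: $m\bb\notin M_1$ for infinitely many $m$. Equivalently, although $m\bb=\degp\phi_1(s^m)$ lies in the monoid $\degp\phi_1(R)$, it is never realized inside $\mathscr{A}$ as a leading exponent with leading term in $C_1^{\circ}$. (In the degenerate case where $\phi_1$ dominates on all of $I$, the faces $C_l^{\circ}$ with $l\ne1$ meet $\degp\mathscr{A}$ only in $\{\zero\}$, and one instead grows the family along such an absent face, where the blocking is automatic.)

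\smallskip
I expect this blocking statement to be the main obstacle, and to be exactly where the hypothesis $\min_{\p}(\degp\phi_1(R))=\zero$ is indispensable: the inclusion $M_1\subseteq\degp\phi_1(R)$ is in general strict, and one must rule out that some other face‑$1$‑dominant element produces the same leading exponent $m\bb$. Controlling this requires exploiting the positivity $\degp\phi_1(f)\succ\zero$ for all $f\in I\setminus\{0\}$ granted by the hypothesis, together with the dominance relation attached to the chosen $s$; I anticipate that a preliminary lemma isolating this comparison (of the type to be proved in Section~\ref{sect:Prel}) is what makes the blocking rigorous.
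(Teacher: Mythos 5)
Your Step 1 is correct and matches the paper's Lemma~\ref{lem:J} and \eqref{eq:deg A}, and your instinct that everything hinges on a ``blocking'' statement about $M_1=\bzs \cup \degp \phi _1(I_{\p ,1})$ points at the right object. But there is a genuine gap exactly there, and it is not only that the blocking statement is left unproved: as formulated it can be false. If the element $s$ you fix satisfies $\degp \phi _1(s^m)\succ \degp \phi _j(s^m)$ for all $j\ne 1$ (in the paper's notation, $s\in I_{\p ,1}$), then $m\bb =\degp \phi _1(s^m)=\degp \phi (s^m)\in \degp \phi (I)\cap C_1^{\circ }\subset M_1$ for \emph{every} $m$, so ``$m\bb \notin M_1$ for infinitely many $m$'' fails outright; and what your pigeonhole step actually requires is $\ba -\bp ^{*}+m\bb \notin \langle G\cap C_1^{\circ }\rangle $, which is not implied by $m\bb \notin M_1$. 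Already in Example~\ref{ex:RS} with $\e _1\succ \e _2$, your family $\ba +m\bb =(m,1)$ decomposes as $(m-1,1)+\e _1$ with both summands in $\degp \mathscr{A}$; the indecomposable elements are the $(1,l)$, i.e.\ the family must be grown along the \emph{dominated} direction. Even if you choose $s\in I_{\p ,2}$, you must still exclude that some \emph{other} $t\in I_{\p ,1}$ has $\degp \phi _1(t)=m\bb $, and this is precisely the cancellation problem ($p-q$ lowering $\degp \phi _1$) that your sketch defers.

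The paper's proof supplies the machinery you say you are missing, and by a different route: it never hunts for indecomposables. Theorem~\ref{thm:incomplete} shows, via a closure argument on the face $C_1\cap \Rn (\degp \mathscr{A})$ (including the limit trick $\lim _{l\to \infty }(\ba +l^{-1}\degp g)=\ba $), that finite generation of $\degp \mathscr{A}$ forces $\Rn (\degp \phi _1(R))$ to be generated by finitely many elements of $\degp \phi _1(I_{\p ,1})$ ($\p $-completeness). Proposition~\ref{prop:key} then derives a contradiction from $\min _{\p }(\degp \phi _1(R))=\zero $ by playing two counting principles against each other: Lemma~\ref{lem:FP} (a Gordan-type argument forcing $\degp \phi _1(R)\sm \degp \phi (I)$ to be finite) versus Lemma~\ref{lem:IP} (an injection $\ba \mapsto \mu (\ba )=\min _{\p }M^{\ba }_{\p }$ into $\degp \phi _1(R)\sm \degp \phi (I)$, built exactly from the cancellation $p-q$). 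Both principles require replacing $\p $ by a weight order $o(\w )$ with $\w \in \cW _{\mathscr{F},\p }$ (Lemmas~\ref{lem:approx lem} and \ref{lem:well-ordered}, Proposition~\ref{prop:well-ordered}) so that the minima $\mu (\ba )$ exist and the inequality $\w \cdot (\bc -\bc _j)>0$ propagates to high powers. None of this appears in your proposal, and it is where the hypothesis $\min _{\p }(\degp \phi _1(R))=\zero $ is actually used; so the sketch, while geometrically well oriented, does not constitute a proof.
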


By the definition of monomial order, 
Theorem~\ref{thm:main} implies the following corollary.

\begin{cor}\label{cor:main}
If there exists $1\le i\le r$ such that 
$\phi _i(R)\subset k[x_1,\ldots ,x_n]$, 
then $\degp \mathscr{A}$ is not finitely generated 
for any monomial order $\p \in \Omega $. 
\end{cor}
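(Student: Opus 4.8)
The plan is to recognize that Corollary \ref{cor:main} is a direct consequence of Theorem \ref{thm:main}: it suffices to verify that the polynomiality hypothesis $\phi _i(R)\subset \kx $, together with the assumption that $\preceq $ is a monomial order, forces the hypothesis $\min _\preceq (\degp \phi _i(R))=\zero $ of Theorem \ref{thm:main}. So I would fix the index $i$ with $\phi _i(R)\subset \kx $ supplied by the hypothesis, fix an arbitrary monomial order $\preceq \in \Omega $, and reduce the whole statement to establishing this single equality.

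Establishing $\min _\preceq (\degp \phi _i(R))=\zero $ splits into two verifications. First, I would show that $\zero \in \degp \phi _i(R)$: since $\phi _i$ is a homomorphism of $k$-algebras, $\phi _i(1)=1=\x ^{\zero }$, so $1\in \phi _i(R)$ and $\degp 1=\zero $, whence $\zero $ lies in $\degp \phi _i(R)$. Second, I would show that $\zero $ is the $\preceq $-minimum of this set: for any nonzero $f\in \phi _i(R)\subset \kx $ we have $\supp f\subset (\Zn )^n$, and by the defining property of a monomial order $\min _\preceq ((\Zn )^n)=\zero $, so every exponent in $\supp f$ is $\succeq \zero $; taking the $\preceq $-maximum gives $\degp f\succeq \zero $. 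Combining the two verifications, $\degp \phi _i(R)$ is contained in $\{ \ba \mid \ba \succeq \zero \} $ and contains $\zero $, so its $\preceq $-minimum is exactly $\zero $.

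With this equality in hand, Theorem \ref{thm:main} applies verbatim and yields that $\degp \mathscr{A}$ is not finitely generated; since $\preceq $ was an arbitrary monomial order, the corollary follows in full. The only point requiring care—the closest thing to an obstacle in this otherwise routine deduction—is the interplay between the two roles of $\zero $: it must simultaneously be realized as a leading exponent (namely $\degp 1$, via $1=\phi _i(1)\in \phi _i(R)$) and be the $\preceq $-smallest element of $(\Zn )^n$ (the defining property of a monomial order, which is precisely what distinguishes the monomial-order case from the general $\preceq \in \Omega $ treated in Theorem \ref{thm:main}). Once both are checked, the conclusion is immediate from Theorem \ref{thm:main}, with no further computation needed.
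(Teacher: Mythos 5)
Your proposal is correct and matches the paper's own (very brief) justification: the paper simply remarks that the corollary follows from Theorem~\ref{thm:main} ``by the definition of monomial order,'' and your argument fills in exactly that deduction, verifying $\zero =\phi_i(1)$'s degree lies in $\degp \phi_i(R)$ and that every element of $\degp \phi_i(R)\subset (\Zn)^n$ is $\succeq \zero$, so that $\min_{\preceq}(\degp \phi_i(R))=\zero$ and Theorem~\ref{thm:main} applies.
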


In the rest of this section, 
we discuss basic structure of $\degp \mathscr{A}$.

\begin{lem}\label{lem:J}
We have $\supp \mathscr{B}\subset C$ 
and $\supp J\subset C^{\circ }$. 
\end{lem}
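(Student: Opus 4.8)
The plan is to argue entirely at the level of supports, using the two elementary containments $\supp (fg)\subset \supp f+\supp g$ and $\supp (f+g)\subset \supp f\cup \supp g$ for $f,g\in \kxx $, together with the observation that the convex cone $C$ is closed under addition and contains $\zero $ (take $\alpha =\beta =1$, resp.\ $\alpha =\beta =0$, in the defining condition of a convex cone, using that $C\ne \emptyset $). Since $\mathscr{B}$ and $J$ are given by explicit generators, it will suffice to check each inclusion on those generators and then propagate through products and $k$-linear combinations.

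First I would establish $\supp \mathscr{B}\subset C$. Every algebra generator of $\mathscr{B}$ lies in some $\phi _i(I)$ or in $U$, hence has support contained in $C_i^{\circ }\subset C$ or in $C^{\circ }\subset C$ by (A2) and (A3). Any monomial in these generators therefore has support contained in a finite set-sum of subsets of $C$, which is again contained in $C$ because $C$ is closed under addition (the empty product, giving the constants, contributes $\zero \in C$). Passing to $k$-linear combinations via $\supp (f+g)\subset \supp f\cup \supp g$ then yields $\supp \mathscr{B}\subset C$.

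Next I would treat $J$, using the first part. I first check that every generator listed in $(\ref{eq:gen of J})$ has support in $C^{\circ }$. For a product $\phi _{i_1}(r_{j_1})\phi _{i_2}(r_{j_2})$ with $i_1<i_2$ we have $\supp \phi _{i_1}(r_{j_1})\subset C_{i_1}^{\circ }$ and $\supp \phi _{i_2}(r_{j_2})\subset C_{i_2}^{\circ }$ (these follow from the set-level statement $\supp \phi _i(I)\subset C_i^{\circ }$ in (A2), since $r_{j_1},r_{j_2}\in I$); as $i_1\ne i_2$, property (C2) places every element of $C_{i_1}^{\circ }+C_{i_2}^{\circ }$ into $C^{\circ }$, so $\supp \phi _{i_1}(r_{j_1})\phi _{i_2}(r_{j_2})\subset C^{\circ }$. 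The generators coming from $U$ have support in $C^{\circ }$ directly by (A3). A general element of $J$ has the form $\sum _{\ell }b_{\ell }g_{\ell }$ with $b_{\ell }\in \mathscr{B}$ and $g_{\ell }$ a generator as above; then $\supp (b_{\ell }g_{\ell })\subset \supp b_{\ell }+\supp g_{\ell }\subset C+C^{\circ }$, and property (C1) sends $C+C^{\circ }$ into $C^{\circ }$. Taking the union over $\ell $ gives $\supp J\subset C^{\circ }$.

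I do not expect a serious obstacle here: the argument is bookkeeping with supports, and its entire content lies in feeding the generators of $\mathscr{B}$ and $J$ into the additive closure of $C$ and the closure properties (C1) and (C2) of $C^{\circ }$. The only points deserving a little care are passing from the set-level inclusion $\supp \phi _i(I)\subset C_i^{\circ }$ to $\supp \phi _i(r_j)\subset C_i^{\circ }$, and observing that checking the two inclusions on algebra/ideal generators is enough before propagating through products and sums.
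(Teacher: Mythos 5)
Your proof is correct and follows essentially the same route as the paper: check the inclusions on the generators of $\mathscr{B}$ and of $J$ (via (A2), (A3), and (C2)), then propagate through products and sums using the additive closure of $C$ and property (C1). The paper's version is just terser about the bookkeeping with $\supp(fg)\subset\supp f+\supp g$ that you spell out.
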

\begin{proof}
If $f\in \phi _1(I)\cup \cdots \cup \phi _r(I)\cup U$, 
then we have $\supp f\subset C$ by (A2) and (A3). 
This implies $\supp \mathscr{B}\subset C$. 
If $g$ is in (\ref{eq:gen of J}), 
then we have $\supp g\subset C^{\circ }$ by (C2) and (A3). 
Since $\supp \mathscr{B}\subset C$, 
it follows from (C1) that 
$\supp gh\subset C^{\circ }$ 
for any $h\in \mathscr{B}$. 
This implies $\supp J\subset C^{\circ }$. 
\end{proof}

Now, by (\ref{eq:decomp}), 
each $f\in \mathscr{A}$ is written as 
\begin{equation}\label{eq:f in A}
f=c+\phi (p)+g,\ 
\text{ where }\ c\in k,\ p\in I\ \text{ and }\ g\in J.
\end{equation}
Then, 
$\supp c$, $\supp \phi (p)$ and $\supp g$ 
are pairwise disjoint, 
since 
$\supp c\subset \bzs $, $\supp \phi (p)\subset C_1^{\circ }
\cup \cdots \cup C_r^{\circ }$ 
and $\supp g\subset C^{\circ }$ by Lemma~\ref{lem:J}, 
and $\bzs $, 
$C_1^{\circ }\cup \cdots \cup C_r^{\circ }$ 
and $C^{\circ }$ are pairwise disjoint. 
Therefore, for each $\p \in \Omega $, we have 
\begin{equation}\label{eq:deg A}
\degp \mathscr{A}=\bzs \sqcup \degp \phi (I)\sqcup \degp J
\ \text{ and }\ 
\rinp \mathscr{A}=k\oplus \rinp \phi (I)\oplus \rinp J. 
\end{equation}

\begin{example}\label{ex:RS}\rm
Let $C$, $C_1$ and $C_2$ be as in Example~\ref{ex:basic}. 
Let 
$R:=k[x]$ be the polynomial ring in one variable, 
$I:=xk[x]$, 
$\phi _i:k[x]\ni p(x)\mapsto p(x_i)\in \kxx $ for $i=1,2$, 
and $U:=\emptyset $. 
Then, 
we have $\mathscr{B}=k[x_1,x_2]$, 
$J=(x_1x_2)$, and 
$$
\mathscr{A}=k[x_1+x_2]+J
=k+\sum _{i\ge 1}k(x_1^i+x_2^i)+J
$$
by (\ref{eq:decomp}). 
For $\p \in \Omega $ with $\e _1\succ \e _2$, we have 
$\degp \mathscr{A}=\bzs \sqcup \Zp \e _1\sqcup (\Zp )^2$. 
This monoid is not finitely generated.

{\unitlength=0.5cm
\begin{picture}(7,6)

\put(3,1){

\put(4,-1){\line(0,1){5}}
\put(3,0){\line(1,0){6}}
\put(4,0){\circle*{0.3}}

\multiput(5,1)(0,1){4}{\multiput(0,0)(1,0){4}{\circle{0.3}}}

\multiput(5,0)(1,0){4}{\circle*{0.15}}
\multiput(5,0)(1,0){4}{\circle{0.3}}

\put(13,3.5){$\bzs $}

\put(13,2){$\Zp \e _1$}

\put(13,0.5){$(\Zp )^2$}

\put(12,3.6){\circle*{0.3}}

\put(12,2.1){\circle{0.3}}
\put(12,2.1){\circle*{0.15}}

\put(12,0.6){\circle{0.3}}

}

\end{picture}}
\end{example}

In \cite{RS}, 
Robbiano-Sweedler 
gave $A=k[x_1+x_2,x_1x_2,x_1x_2^2]$ 
as an example of a finitely generated $k$-subalgebra 
whose initial algebra is not finitely generated. 
We note that $\mathscr{A}$ in Example~\ref{ex:RS} 
is equal to this $A$.

\begin{rem}\label{prop:non-normal}\rm 
{\rm (i)} 
$\mathscr{A}$ is not equal to $\mathscr{B}$ 
for the following reason: 
For each $p\in I\sm \zs $, 
we have 
$\supp \phi (p)=\bigsqcup _{i=1}^r\supp \phi _i(p)\not\subset C_1^{\circ }$, 
since $C_1^{\circ },\ldots ,C_r^{\circ }$ are pairwise disjoint. 
Hence, 
$\supp f\not\subset C_1^{\circ }$ holds for any $f\in \mathscr{A}\sm \zs $ 
by (\ref{eq:f in A}). 
On the other hand, 
we have $\phi _1(r_1)\in \mathscr{B}\sm \zs $ and 
$\supp \phi _1(r_1)\subset C_1^{\circ }$.

\nd 
{\rm (ii)} 
We have $Q(\mathscr{A})=k(J)=Q(\mathscr{B})$, 
since $g\mathscr{A}\subset g\mathscr{B}\subset J$ 
for $0\ne g\in J$. 
Here, $Q(\cdot )$ denotes the field of fractions. 
Therefore, 
$\mathscr{A}$ is not normal by (i) and Lemma~\ref{lem:basic}~(i). 
\end{rem}

\begin{rem}\label{rem:rat poly cone}\rm 
In the construction above, 
we may assume without loss of generality that 
$C$ is a rational polyhedral cone (see Section~\ref{sect:Prel}) 
by replacing $C$ with 
$C':=\sum _{\ba \in \mathscr{F}}\Rn \ba $, 
where $\mathscr{F}:=\bigcup _{i=1}^r\bigcup _{j=1}^N
\supp \phi _i(r_j)\cup \supp U$. 
Actually, 
since $C'\subset C$, we see that 
$C_i':=\{ \ba \in C'\mid \omega _i\cdot \ba =0\} $ 
is a face of $C'$ for $i=1,\ldots ,r$, 
and we have 
$\supp \phi _i(I)\subset C_i'\sm \bigcup _{j\ne i}C_j'$ 
for each $i$ 
and $\supp U\subset C'\sm \bigcup _{j=1}^rC_j'$. 
\end{rem}

\section{Preliminary}\label{sect:Prel}
\setcounter{equation}{0}

Recall that a convex cone $C\subset \R ^n$ 
is {\it polyhedral} if 
$C=\bigcap _{i=1}^t
\{ \ba \in \R ^n\mid \eta _i \cdot \ba \ge 0\} $ 
for some $t\ge 0$ and 
$\eta _1,\ldots ,\eta _t\in \R ^n\sm \bzs $. 
If this is the case, 
$C$ is a closed subset of $\R ^n$ for the Euclidean topology. 
By the Farkas-Minkowski-Weyl theorem (cf.~\cite[Cor.~7.1a]{LP}), 
a convex cone $C\subset \R ^n$ is polyhedral 
if and only if {\it finitely generated}, 
i.e., $C=\Rn \ba _1+\cdots +\Rn \ba _s$ 
for some $s\ge 0$ and $\ba _1,\ldots ,\ba _s\in C$.

For each $S\subset \R ^n$, 
we denote by $\Rn S$ the convex cone generated by $S$, 
i.e., 
the union of 
$\Rn \ba _1+\cdots +\Rn \ba _s$ 
for all $s\ge 0$ and $\ba _1,\ldots ,\ba _s\in S$. 
If $\Rn S$ is finitely generated, 
then $\Rn S=\Rn S_0$ holds for some finite subset $S_0$ of $S$. 
Actually, 
for any $s\ge 0$ and $\ba _1,\ldots ,\ba _s\in \Rn S$, 
there exists a finite subset $S_0$ of $S$ 
such that $\ba _1,\ldots ,\ba _s\in \Rn S_0$.

We call $C\subset \R ^n$ a {\it rational polyhedral cone} 
if $C=\Rn \ba _1+\cdots +\Rn \ba _s$ 
for some $s\ge 0$ and $\ba _1,\ldots ,\ba _s\in \Q ^n$. 
When this is the case, 
$\Q ^n\cap C$ is equal to $\Qn \ba _1+\cdots +\Qn \ba _s$. 
In fact, 
to see this, 
we may assume that $\ba _1,\ldots ,\ba _s$ are linearly independent 
by Carath\'eodory's theorem (cf.\ e.g., \cite[Cor.~7.1i]{LP}), 
and hence are part of a $\Q $-basis of $\Q ^n$.

The following lemma is a variant of Gordan's lemma.

\begin{lem}\label{lem:Gordan}
Let $S$ be a submonoid of $\Z ^n$ 
such that $\Rn S=\sum _{i=1}^s\Rn \ba _i$ 
for some $s\ge 0$ and 
$\ba _1,\ldots ,\ba _s\in S$. 
Then, 
$S=\sum _{i=1}^s\Zn \ba _i+F$ holds for some 
finite subset $F$ of $S$. 
In particular, 
$S$ is finitely generated. 
\end{lem}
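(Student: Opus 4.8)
The plan is to prove this variant of Gordan's lemma by reducing the general monoid $S$ to the finitely generated cone $\Rn S=\sum_{i=1}^s\Rn\ba_i$ and then controlling the ``fractional parts'' of elements of $S$. First I would introduce the finitely generated submonoid $M:=\sum_{i=1}^s\Zn\ba_i\subset S$, whose generators $\ba_1,\ldots,\ba_s$ generate the same cone as $S$. The key observation is that every element $\ba\in S\subset\Rn S$ can be written as $\ba=\sum_{i=1}^s\lambda_i\ba_i$ with $\lambda_i\ge 0$ (not necessarily integers), and I want to split off the integer parts $\lfloor\lambda_i\rfloor$ so that $\ba-\sum_{i=1}^s\lfloor\lambda_i\rfloor\ba_i$ lands in a bounded region.

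The central step is to define the ``fundamental parallelepiped'' region
\[
P:=\Big\{\sum_{i=1}^s\mu_i\ba_i\ \Big|\ 0\le\mu_i<1\ (i=1,\ldots,s)\Big\}
\]
and to set $F:=S\cap P$. I would then argue that $F$ is finite: $P$ is a bounded subset of $\R^n$ (it is contained in the compact set $\sum_{i=1}^s[0,1]\ba_i$), so it contains only finitely many points of the discrete lattice $\Z^n$, and hence $F=S\cap P$ is finite since $S\subset\Z^n$. With $F$ in hand, I would show $S=M+F$. The inclusion $M+F\subset S$ is immediate because $M\subset S$, $F\subset S$, and $S$ is a monoid closed under addition. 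For the reverse inclusion, given $\ba\in S$ write $\ba=\sum_{i=1}^s\lambda_i\ba_i$ with $\lambda_i\ge 0$, set $m_i:=\lfloor\lambda_i\rfloor\in\Zn$, and observe that $\ba-\sum_{i=1}^s m_i\ba_i=\sum_{i=1}^s(\lambda_i-m_i)\ba_i$ has all coefficients in $[0,1)$, so it lies in $P$; it also lies in $S$ because it equals $\ba-\sum_i m_i\ba_i$ with $\sum_i m_i\ba_i\in M\subset S$ and $S$ is closed under the relevant difference — this last point needs care.

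The main obstacle is precisely this closure-under-difference issue: a submonoid of $\Z^n$ need not be a group, so I cannot freely subtract $\sum_i m_i\ba_i$ from $\ba$ and remain in $S$. To handle this I would avoid subtraction inside $S$ and instead show directly that the remainder lies in $S$. The cleanest route is to observe that the representation $\ba=\sum_i\lambda_i\ba_i$ may use non-unique coefficients, so I should fix a specific well-chosen representation. If the $\ba_i$ can be taken linearly independent (via Carath\'eodory, as already discussed in the excerpt preceding this lemma), the $\lambda_i$ are uniquely determined and rational, and the remainder $\ba-\sum_i m_i\ba_i$ is then an explicit element of $\Z^n$ lying in both $P$ and $\Rn S$; one still must verify it belongs to $S$ rather than merely to $\Z^n\cap\Rn S$. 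I expect to close this gap by noting that the decomposition should be organized so that the integer multiples are peeled off one generator at a time, keeping each intermediate remainder in $S$, or alternatively by enlarging $F$ to $S\cap(\Z^n\cap P')$ for a slightly larger bounded region $P'$ and arguing that the finitely many remainders that actually arise from elements of $S$ form the desired set $F$. Once $S=M+F=\sum_{i=1}^s\Zn\ba_i+F$ is established, finite generation of $S$ follows immediately since $\{\ba_1,\ldots,\ba_s\}\cup F$ is a finite generating set.
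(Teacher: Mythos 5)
The parallelepiped reduction is fine as far as it goes, but the gap you flag at the end is the real content of the lemma, and neither of your proposed repairs closes it. Writing $\ba =\sum _im_i\ba _i+\bb $ with $\bb \in \Z ^n\cap P$ only shows $S\subset \sum _i\Zn \ba _i+(\Z ^n\cap P)$; the representative $\bb $ need not lie in $S$, while the statement requires $F\subset S$. Your first repair (peeling off one generator at a time while staying in $S$) terminates at the set of $\ba \in S$ with $\ba -\ba _i\notin S$ for every $i$, and the finiteness of that set is essentially what is to be proved. Your second repair (a slightly larger bounded region $P'$ fixed in advance) fails outright: for $S$ generated by $(2,0)$, $(0,1)$, $(101,0)$ in $\Z ^2$, the cone $\Rn S$ is generated by $\ba _1=(2,0)$, $\ba _2=(0,1)\in S$, the residue $(1,0)\in \Z ^2\cap P$ is not in $S$, and the least element of $S$ congruent to it modulo $M:=\sum _i\Zn \ba _i$ is $(101,0)$ --- no bounded region chosen independently of $S$ contains it. What is missing is a Noetherianity step: for each $\bb \in \Z ^n\cap P$ the set $T_{\bb }:=\{ \bc \in M\mid \bc +\bb \in S\} $ is an ideal of the finitely generated monoid $M$ (it is stable under adding elements of $M$, since $M\subset S$ and $S$ is a monoid), hence $T_{\bb }=\bigcup _j(\bc _j+M)$ for finitely many $\bc _j$ (Dickson's lemma, or Noetherianity of the monoid algebra $k[M]$ applied to the monomial ideal spanned by $\{ \x ^{\bc }\mid \bc \in T_{\bb }\} $); taking $F$ to be the set of all $\bc _j+\bb $, which do lie in $S$, finishes the proof. (A smaller slip: Carath\'eodory lets you take the $\ba _i$ linearly independent only after decomposing the cone into simplicial pieces, so you cannot simply ``assume'' independence for the whole cone.)

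For comparison, the paper avoids the parallelepiped entirely and runs the Noetherian argument inside the Laurent polynomial ring: it sets $A:=\sum _{\ba \in S}k\x ^{\ba }$ and $R:=k[\x ^{\ba _1},\ldots ,\x ^{\ba _s}]$, takes generators $\bb _1,\ldots ,\bb _t$ of $\widehat{S}:=\Z ^n\cap \Rn S$ provided by the ordinary Gordan lemma, observes that each $\x ^{\bb _j}$ is integral over $R$ because $l\bb _j\in \sum _i\Zn \ba _i$ for some $l\in \Zp $, concludes that $k[\x ^{\bb _1},\ldots ,\x ^{\bb _t}]\supset A$ is a Noetherian $R$-module, and hence $A=\sum _uRf_u$ is a finite $R$-module; then $F:=\bigcup _u\supp f_u$ works. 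You should either adopt that route or supply the Dickson-type step described above.
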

\begin{proof}
Note that $A:=\sum _{\ba \in S}k\x ^{\ba }$ 
is a $k$-subalgebra of $\kxx $ 
containing $R:=k[\x ^{\ba _1},\ldots ,\x ^{\ba _s}]$. 
We show that $A$ is a finite $R$-module, 
i.e., 
$A=\sum _{i=1}^uRf_i$ 
for some $u\ge 1$ and $f_1,\ldots ,f_u\in A$. 
Then, 
$S=\sum _{i=1}^s\Zn \ba _i+F$ holds with $F:=\bigcup _{i=1}^u\supp f_i$.

By assumption, 
$\Rn S$ is a rational polyhedral cone. 
Hence, 
the monoid $\widehat{S}:=\Z ^n\cap \Rn S$ 
is finitely generated by Gordan's lemma 
(cf.~\cite[Prop.\ 1.1 (ii)]{Oda}). 
Choose $\bb _1,\ldots ,\bb _t\in \widehat{S}$ 
so that $\widehat{S}=\sum _{i=1}^t\Zn \bb _i$. 
Since $\bb _1,\ldots ,\bb _t\in \widehat{S}\subset \Q ^n\cap \Rn S
=\Q^n\cap \sum _{i=1}^s\Rn \ba _i=\sum _{i=1}^s\Qn \ba _i$, 
there exists $l\in \Zp $ 
such that $l\bb _1,\ldots ,l\bb _t\in \sum _{i=1}^s\Zn \ba _i$. 
This implies 
$(\x ^{\bb _1})^l,\ldots ,(\x ^{\bb _t})^l\in R$. 
Hence, $\x ^{\bb _1},\ldots ,\x ^{\bb _t}$ are integral over $R$. 
Thus, 
$k[\x ^{\bb _1},\ldots ,\x ^{\bb _t}]$ 
is a Noetherian $R$-module. 
Since $S\subset \widehat{S}$, 
we have $A\subset k[\x ^{\bb _1},\ldots ,\x ^{\bb _t}]$. 
Therefore, 
$A$ is a finite $R$-module. 
\end{proof}

Let $\cW $ be the set of 
$(w_1,\ldots ,w_n)\in \R ^n$ such that 
$w _1,\ldots ,w_n$ are linearly independent over $\Q $. 
This set is the complement of 
a countable union of nowhere dense subsets 
$\{ \w \in \R ^n\mid \w \cdot \ba =0\} $ of $\R ^n$, 
where $\ba \in \Q ^n\sm \bzs $. 
Hence, 
$\cW $ is dense in $\R ^n$ by Baire's theorem.

Lemma~\ref{lem:approx} 
below is well known when $\p $ is a monomial order. 
We include a proof here, 
because we could not find a suitable reference 
dealing with general $\p \in \Omega $.

\begin{lem}\label{lem:approx}
For any $\p \in \Omega $, $s\ge 1$ and 
$\ba _1,\ldots ,\ba _s\in \Z ^n$ 
with $\ba _1,\ldots ,\ba _s\succ \zero $, 
there exists $\w \in \cW $ such that 
$\w \cdot \ba _1>0,\ldots ,\w \cdot \ba _s>0$. 
\end{lem}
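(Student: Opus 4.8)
The plan is to first produce an arbitrary real weight vector that is strictly positive on all of $\ba _1,\ldots ,\ba _s$, and then to nudge it into $\cW $ using density. So I would not use $\w \in \cW $ from the outset; I would relax that requirement, solve the easier separation problem, and recover the rationally-independent coordinates at the very end.

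First I would establish the existence of some $\w _0\in \R ^n$ (not yet required to lie in $\cW $) with $\w _0\cdot \ba _i>0$ for all $i$. For this I invoke a Farkas-type theorem of the alternative (cf.~\cite{LP}): either such a $\w _0$ exists, or there is a nonzero tuple $(\lambda _1,\ldots ,\lambda _s)\in (\Rn )^s$ with $\sum _{i=1}^s\lambda _i\ba _i=\zero $. The crux of the argument is to exclude this second alternative, and this is exactly where the hypothesis $\ba _i\succ \zero $ enters. Since the $\ba _i$ are integral, the vanishing relation lives in a rational subspace of $\R ^s$ meeting the nonnegative orthant nontrivially, so it can be taken with nonnegative \emph{integer} coefficients $m_1,\ldots ,m_s$, not all zero. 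Now the additive compatibility defining $\Omega $ gives that $\ba \succ \zero $ and $\bb \succeq \zero $ imply $\ba +\bb \succ \zero $, and hence $m\ba \succ \zero $ for every $m\in \Zp $ whenever $\ba \succ \zero $. Applying this to $\sum _i m_i\ba _i$, in which at least one $m_i$ is positive while every term is $\succeq \zero $, yields $\sum _i m_i\ba _i\succ \zero $, contradicting $\sum _i m_i\ba _i=\zero $. Therefore the first alternative must hold and $\w _0$ exists.

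Finally, the conditions $\w \cdot \ba _i>0$ for $i=1,\ldots ,s$ are finitely many strict inequalities, so they cut out an open neighborhood of $\w _0$ in $\R ^n$. Because $\cW $ is dense in $\R ^n$ by Baire's theorem (as recorded just before the lemma), this neighborhood contains a point $\w \in \cW $, which is the desired weight vector.

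I expect the perturbation step to be immediate: the inequalities are open and $\cW $ is dense, so there is essentially nothing to check. The main obstacle is the middle step, namely converting the order hypothesis into the nonexistence of a nonnegative vanishing relation among the $\ba _i$; the one point requiring care there is arranging \emph{integer} coefficients $m_i$ (rather than merely real $\lambda _i$) so that the order-theoretic positivity argument, which lives on $\Z ^n$, can be applied. Note that this is precisely where integrality of the $\ba _i$ is used, and it is the reason the statement is not purely a fact about cones in $\R ^n$.
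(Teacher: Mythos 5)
Your proof is correct and follows essentially the same route as the paper: reduce to finding any real $\w _0$ with $\w _0\cdot \ba _i>0$ via openness of the inequalities and density of $\cW $, then rule out a nonzero nonnegative (hence, by integrality, nonnegative integer) vanishing relation $\sum _im_i\ba _i=\zero $ using the additive compatibility of $\p $. The only cosmetic difference is that you phrase the dichotomy as a Farkas-type alternative while the paper phrases it as strong convexity of the cone $\sum _i\Rn \ba _i$; these are the same fact.
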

\begin{proof}
Since $\cW $ is dense and 
$V:=\{ \w \in \R ^n\mid 
\w \cdot \ba _1>0,\ldots ,\w \cdot \ba _s>0\} $ 
is open in $\R ^n$, 
it suffices to check $V\ne \emptyset $, 
or equivalently $\bzs $ is a face of 
$C:=\sum _{i=1}^s\Rn \ba _i$. 
If not, 
then $C$ is not {\it strongly convex} (cf.\ \cite[Chap.\ 1.1]{Oda}), 
so there exists $\alpha =(\alpha _i)_{i=1}^s\in 
(\Rn )^s\sm \bzs $ 
such that $\sum _{i=1}^s\alpha _i\ba _i=\zero $. 
Since $\ba _1,\ldots ,\ba _s\in \Z ^n$, 
we may choose $\alpha $ from $(\Zn )^s\sm \bzs $. 
Then, 
we have 
$\zero =\sum _{i=1}^s\alpha _i\ba _i\succ \zero $, 
a contradiction. 
\end{proof}

The following lemma is obtained 
by using Lemma~\ref{lem:approx} 
for $\{ \ba _1,\ldots ,\ba _s\} :=
\{ \bb -\ba \mid \ba ,\bb \in \mathscr{F},\ \ba \prec \bb \} $.

\begin{lem}\label{lem:approx lem}
For every finite subset $\mathscr{F}$ of $\Z ^n$ 
and $\p \in \Omega $, 
we have 
$$
\cW _{\mathscr{F},\p }:=
\{ \w \in \cW \mid 
\w \cdot \ba < \w \cdot \bb \iff 
\ba \prec \bb \quad (\forall \ba ,\bb \in \mathscr{F})\} 
\ne \emptyset . 
$$
\end{lem}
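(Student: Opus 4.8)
The plan is to translate the order relations inside $\mathscr{F}$ into positivity conditions on difference vectors and then invoke Lemma~\ref{lem:approx}, exactly as the hint indicates. First I would dispose of the degenerate case where $\mathscr{F}$ has at most one element: then no two distinct $\ba,\bb\in\mathscr{F}$ exist, the conditions defining $\cW _{\mathscr{F},\p}$ are vacuous, and any element of $\cW $ works; since $\cW $ is dense in $\R ^n$, it is nonempty, so $\cW _{\mathscr{F},\p}=\cW \ne\emptyset$.

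For the main case, I would set $\{\ba _1,\ldots ,\ba _s\}:=\{\bb -\ba \mid \ba ,\bb \in \mathscr{F},\ \ba \prec \bb \}$; since $\mathscr{F}$ now has at least two elements, totality of $\p $ guarantees this set is nonempty, so $s\ge 1$. The crucial point is that every $\ba _i$ satisfies $\ba _i\succ \zero $: if $\ba \prec \bb $, then translation invariance of $\p $ (applied with $\bc =-\ba $) gives $\zero \preceq \bb -\ba $, and $\bb -\ba \ne \zero $, hence $\zero \prec \bb -\ba $. Lemma~\ref{lem:approx} then produces $\w \in \cW $ with $\w \cdot \ba _i>0$ for all $i$, and the claim will be that this $\w $ lies in $\cW _{\mathscr{F},\p}$; note that the membership $\w \in \cW $ is delivered by the lemma and needs no separate verification.

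Finally I would check the biconditional for all $\ba ,\bb \in \mathscr{F}$. The implication $\ba \prec \bb \Rightarrow \w \cdot \ba <\w \cdot \bb $ is immediate, since then $\bb -\ba $ is one of the $\ba _i$ and $\w \cdot (\bb -\ba )>0$. The step I expect to be the main obstacle is the reverse implication, which Lemma~\ref{lem:approx} does not supply directly: assuming $\w \cdot \ba <\w \cdot \bb $ I would first note $\ba \ne \bb $, then use the trichotomy of the total order $\p $ to conclude that either $\ba \prec \bb $ or $\bb \prec \ba $; the latter is impossible because, by the implication just proved, it would force $\w \cdot \bb <\w \cdot \ba $, a contradiction. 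Hence $\ba \prec \bb $, completing the equivalence and showing $\w \in \cW _{\mathscr{F},\p}$.
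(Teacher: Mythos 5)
Your proposal is correct and follows exactly the route the paper indicates: apply Lemma~\ref{lem:approx} to the difference set $\{\bb -\ba \mid \ba ,\bb \in \mathscr{F},\ \ba \prec \bb \}$, with the positivity of the differences coming from translation invariance of $\p$ and the reverse implication from trichotomy. The paper leaves these verifications to the reader, and you have filled them in correctly.
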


We remark that the same argument as above shows, 
for every finite subset $\mathscr{F}$ of $\Z ^n$ 
and $\p \in \Omega $, 
there exists $\w \in \Q ^n$ such that 
$\w \cdot \ba < \w \cdot \bb \Leftrightarrow  
\ba \prec \bb $ 
$(\forall \ba ,\bb \in \mathscr{F})$, 
since $\Q ^n$ is also dense in $\R ^n$.

The following are also used to prove 
Theorem~\ref{thm:main}.

\begin{lem}\label{lem:well-ordered}
Let $V$ be a $k$-vector subspace of $\kxx $, 
$\p '\in \Omega $, and $S\subset \degpp V$. 
If $S$ is well-ordered for $\p '$, 
and $S\cap \supp f\ne \emptyset $ for each $f\in V\sm \zs $, 
then we have $S=\degpp V$. 
\end{lem}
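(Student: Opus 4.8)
The plan is to prove the nontrivial inclusion $\degpp V\subseteq S$, since $S\subseteq \degpp V$ holds by hypothesis. I would argue by contradiction, combined with a reduction (descent) argument that exploits the well-ordering of $S$. First I would fix $\bb \in \degpp V$ and assume, toward a contradiction, that $\bb \notin S$. Choosing $f_0\in V\sm \zs $ with $\degpp f_0=\bb $, I would construct a sequence $f_0,f_1,f_2,\ldots $ in $V\sm \zs $ as follows. Given $f_i$, if $S\cap \supp f_i=\emptyset $ I stop; otherwise I set $\bc_i:=\max _{\preceq '}(S\cap \supp f_i)$, which exists because $S\cap \supp f_i$ is a finite set totally ordered by $\preceq '$. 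Since $\bc_i\in S\subseteq \degpp V$, there is $g_i\in V$ with $\degpp g_i=\bc_i$; after rescaling $g_i$ so that its leading coefficient equals the coefficient of $\x ^{\bc_i}$ in $f_i$, I set $f_{i+1}:=f_i-g_i\in V$, which kills the term at $\bc_i$.

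The key step is to track two quantities along this process. On one hand, because $\degpp g_i=\bc_i$ and $\bc_i\in \supp f_i$ with $\bc_i\ne \bb $ forces $\bc_i\prec '\bb $, every exponent of $g_i$ is $\preceq '\bc_i\prec '\bb $, so the reduction never alters the coefficient of $\x ^{\bb }$; hence $\bb \in \supp f_i$ for all $i$, which guarantees $f_i\ne 0$ and $\degpp f_i=\bb $ throughout. On the other hand, $\supp f_{i+1}\subseteq (\supp f_i\cup \supp g_i)\sm \{ \bc_i\} $, and every element of this set that lies in $S$ is $\prec '\bc_i$: the elements of $\supp f_i\cap S$ because $\bc_i$ was their $\preceq '$-maximum and has been removed, and the elements of $\supp g_i\sm \{ \bc_i\} $ because they are $\preceq '\bc_i$ and distinct from $\bc_i$. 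Consequently $\bc_{i+1}\prec '\bc_i$ whenever it is defined, so the $\bc_i$ form a strictly $\preceq '$-decreasing sequence in $S$.

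Since $S$ is well-ordered for $\preceq '$, it admits no infinite strictly $\preceq '$-decreasing sequence, so the reduction must terminate: $S\cap \supp f_m=\emptyset $ for some $m$. But $f_m\in V\sm \zs $ (as $\bb \in \supp f_m$), which contradicts the hypothesis that $S\cap \supp f\ne \emptyset $ for every $f\in V\sm \zs $. This contradiction yields $\bb \in S$, and since $\bb \in \degpp V$ was arbitrary we obtain $\degpp V\subseteq S$, hence $S=\degpp V$.

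I expect the main obstacle to be verifying the two invariants of the reduction simultaneously: that the leading exponent $\bb $ survives every step (so the $f_i$ remain nonzero and keep leading exponent $\bb $), and that the largest $S$-exponent occurring in the support strictly drops at each step. The first is a clean consequence of $\bc_i\prec '\bb $, while the second requires the care above in describing $S\cap \supp f_{i+1}$. Once both are established, the well-ordering of $S$ delivers termination and the desired contradiction essentially for free.
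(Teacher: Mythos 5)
Your proposal is correct and is essentially the paper's argument: both reduce $f$ by an element of $V$ whose leading exponent is the largest element of $S\cap\supp f$, observe that this quantity strictly decreases while the leading exponent $\bb$ is preserved, and invoke the well-ordering of $S$ to force a contradiction. The only difference is cosmetic — the paper phrases the descent as choosing $f$ with $\ba_f$ minimal and performing one reduction step, whereas you iterate the reduction and rule out an infinite strictly decreasing chain.
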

\begin{proof}
Suppose that there exists $\ba \in \degpp V$ 
not belonging to $S$. 
Pick $f\in V\sm \zs $ with $\degpp f=\ba $. 
Then, 
$\ba _f:=\max _{\p '}(S\cap \supp f)$ is less than $\ba $. 
Since $S$ is well-ordered, 
we can choose $f$ so that $\ba _f$ is minimal. 
Since $\ba _f\in S\subset \degpp V$, 
there exists $g\in V\sm \zs $ with $\rinpp g=\x ^{\ba _f}$. 
Set $h:=f-cg$, 
where $c\in k^*$ is the coefficient of $\x ^{\ba _f}$ in $f$. 
Then, 
we have 
$\ba _h\prec '\ba _f$, 
$h\in V$, 
and $\degpp h=\ba $, 
since $\degpp g=\ba _f\prec '\ba =\degpp f$. 
This contradicts the minimality of $\ba _f$. 
\end{proof}

For each $\w \in \cW $, 
we define $\p \in \Omega $ by $\ba \preceq \bb $ 
if $\w \cdot \ba \le \w \cdot \bb $ 
for $\ba ,\bb \in \Z ^n$. 
We denote this $\p $ by $o(\w )$, 
and set $o(\mathcal{V}):=\{ o(\w )\mid \w \in \mathcal{V}\} $ 
for $\mathcal{V}\subset \cW $.

\begin{prop}\label{prop:well-ordered}
Let $P$ be a $k$-subalgebra of $\kxx $, 
and let $\p \in \Omega $ be such that $\degp P$ is finitely generated 
and $\min _{\p }(\degp P)=\zero $. 
Then, 
there exists a finite subset $\mathscr{F}$ of $\Z ^n$ 
for which the following $(*)$ holds for each 
$\p '\in o(\cW _{\mathscr{F},\p })$$:$

\nd $(*)$ $\degp P$ is well-ordered for $\p '$ 
and $\deg _{\p '}P=\degp P$. 

\end{prop}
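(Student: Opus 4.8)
The plan is to exploit two facts about $\degp P$: that it is a finitely generated monoid, and that leading-degree maps are multiplicative on the domain $\kxx $. I would build $\mathscr{F}$ out of the supports of finitely many elements of $P$ realizing the monoid generators of $\degp P$, so that $\p '$ is forced to agree with $\p $ on each such generator individually. The well-ordering half of $(*)$ then comes from positivity of $\w $ on the generators, and the equality $\deg _{\p '}P=\degp P$ comes from Lemma~\ref{lem:well-ordered} once one establishes the inclusion $\degp P\subseteq \deg _{\p '}P$.

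First I would fix a finite generating set of the monoid $\degp P$. Since $\min _{\p }(\degp P)=\zero $, every generator is $\succeq \zero $, so I may take nonzero generators $\bb _1,\ldots ,\bb _m$ with $\bb _i\succ \zero $ and $\degp P=\Zn \bb _1+\cdots +\Zn \bb _m$. For each $i$ I choose $f_i\in P$ with $\degp f_i=\bb _i$, and set
$$
\mathscr{F}:=\{ \zero \} \cup \bigcup _{i=1}^m\supp f_i,
$$
a finite subset of $\Z ^n$. I then fix any $\p '=o(\w )$ with $\w \in \cW _{\mathscr{F},\p }$, which is nonempty by Lemma~\ref{lem:approx lem}.

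For well-ordering, I would first note that $\zero ,\bb _i\in \mathscr{F}$ and $\zero \prec \bb _i$ force $\w \cdot \bb _i>0$ for every $i$. Writing a general element of $\degp P$ as $\sum _ic_i\bb _i$ with $c_i\in \Zn $, its $\w $-value is $\sum _ic_i(\w \cdot \bb _i)$; since each $\w \cdot \bb _i>0$, only finitely many tuples $(c_i)$ can keep this value below a fixed bound, so $\{ \ba \in \degp P\mid \w \cdot \ba \le M\} $ is finite for every $M$. As distinct elements of $\Z ^n$ have distinct $\w $-values (because $\w \in \cW $), this local finiteness shows that every nonempty subset of $\degp P$ has a $\p '$-least element, i.e.\ $\degp P$ is well-ordered for $\p '$.

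The identity $\deg _{\p '}P=\degp P$ I would deduce from Lemma~\ref{lem:well-ordered} with $V=P$ and $S=\degp P$: the hypotheses that $S$ is well-ordered and that $S\cap \supp f\ne \emptyset $ (since $\degp f\in \degp P\cap \supp f$) for all $f\in P\sm \zs $ are immediate, so the crux is the inclusion $\degp P\subseteq \deg _{\p '}P$. The main obstacle here is that $\p '$ need not agree with $\p $ on the support of a product $\prod _if_i^{c_i}$, and these supports cannot all be forced into a finite $\mathscr{F}$. The resolution is to control the orders only on the individual generators: since $\supp f_i\subseteq \mathscr{F}$ and $\bb _i=\degp f_i$ is the $\p $-largest element of $\supp f_i$, the defining property of $\cW _{\mathscr{F},\p }$ gives $\deg _{\p '}f_i=\bb _i$ as well. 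Multiplicativity of $\deg _{\p '}$ on the domain $\kxx $ then yields, for $\ba =\sum _ic_i\bb _i\in \degp P$,
$$
\deg _{\p '}\Big(\prod _{i=1}^mf_i^{c_i}\Big)=\sum _{i=1}^mc_i\deg _{\p '}f_i=\sum _{i=1}^mc_i\bb _i=\ba ,
$$
so $\ba \in \deg _{\p '}P$. With the inclusion in hand, Lemma~\ref{lem:well-ordered} gives $\degp P=\deg _{\p '}P$, completing $(*)$.
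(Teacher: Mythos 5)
Your proof is correct and follows essentially the same route as the paper's: the same choice of $\mathscr{F}$ from the supports of elements realizing the monoid generators, the same local-finiteness argument for well-ordering via $\w \cdot \bb _i>0$, and the same application of Lemma~\ref{lem:well-ordered} after establishing $\degp P\subseteq \deg _{\p '}P$. The only cosmetic difference is that you spell out the multiplicativity step $\deg _{\p '}\bigl(\prod _if_i^{c_i}\bigr)=\sum _ic_i\bb _i$ explicitly, where the paper just invokes that $\deg _{\p '}P$ is a submonoid.
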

\begin{proof}
By assumption, 
there exist $s\ge 0$ and 
$f_1,\ldots ,f_s\in P\sm \zs $ such that 
$\degp P=\sum _{i=1}^s\Zn \degp f_i$ and 
$\degp f_i\succ \zero $ for each $i$. 
Set $\mathscr{F}:=\bzs \cup \bigcup _{i=1}^s\supp f_i$. 
Take any $\w \in \cW _{\mathscr{F},\p }$ and put $\p ':=o(\w )$. 
Then, 
$\w \cdot \degp f_i>\w \cdot \zero =0$ holds for each $i$. 
Hence, 
for any $\bb \in \degp P$, 
the number of $\ba \in \degp P=\sum _{i=1}^s\Zn \degp f_i$ 
with $\w \cdot \ba <\w \cdot \bb $ is finite. 
This implies that $\degp P$ 
is well-ordered for $\p '$.

Since $\degp f_i=\deg _{\p '}f_i\in \deg _{\p '}P$ 
by the definition of $\mathscr{F}$, 
we have 
$\degp P=\sum _{i=1}^s\Zn \degp f_i\subset \deg _{\p '}P$. 
We also have 
$\supp f\cap \degp P\ne \emptyset $ 
for each $f\in P\sm \zs $. 
Therefore, 
we get $\degp P=\deg _{\p '}P$ by Lemma~\ref{lem:well-ordered} with 
$V:=P$ and $S:=\degp P$. 
\end{proof}

\section{Proof of Theorem~\ref{thm:main}}\label{sect:proof}
\setcounter{equation}{0}

The goal of this section is to prove Theorem~\ref{thm:main}. 
Throughout, 
let $R$ and $I$ be as in (A1), 
and let $\psi _1,\ldots ,\psi _r:R\to \kxx $ 
be injective homomorphisms of $k$-algebras, 
where $r\ge 2$. 
We assume that $\supp \psi _i(R)=\bzs \sqcup \supp \psi _i(I)$ 
for each $i$, 
and $\supp \psi _i(I)\cap \supp \psi _j(I)=\emptyset $ 
if $i\ne j$. 
These assumptions are fulfilled for 
$\phi _1,\ldots ,\phi _r$ in (A2). 
For the moment, we fix $\p \in \Omega $. 
We remark that 
$\degp \psi _i(R)=\bzs \sqcup \degp \psi _i(I)$ for each $i$.

Set $\psi :=\psi _1+\cdots +\psi _r:R\to \kxx $. 
For $i=1,\ldots ,r$, 
we define 
$$
I_{\p ,i}:=\{ p\in I\sm \zs \mid \degp \psi _i(p)\succ \degp \psi _j(p)
\ (\forall j\ne i)\} . 
$$
Then, 
since $\supp \psi _i(I)\cap \supp \psi _j(I)=\emptyset $ if $i\ne j$, 
we have $I\sm \zs =\bigsqcup _{i=1}^rI_{\p ,i}$, 
and 
\begin{equation}\label{eq:Ipi}
p\in I_{\p ,i}\iff \degp \psi (p)=\degp \psi _i(p)
\end{equation}
for $p \in I\sm \zs $ and $i\in \{ 1,\ldots ,r\}$. 
We remark that the following hold for each $i$. 

\smallskip 

\nd (1$^\circ $) 
$\degp \psi _i(I)\cap \degp \psi (I)=\degp \psi _i(I_{\p ,i})$. 

\smallskip 

\nd (2$^\circ $) 
$I_{\p ,i}$ is closed under multiplication, 
so $\degp \psi _i(I_{\p ,i})$ is closed under addition. 
Since 
$\degp \psi _i(I_{\p ,i})\subset \supp \psi _i(I)
\subset \Z ^n\sm \bzs $, 
it follows that $\# \degp \psi _i(I_{\p ,i})=\infty $ 
if $I_{\p ,i}\ne \emptyset $.

\begin{defn}\rm 
(i) We say that $(\psi _i)_{i=1}^r$ is $\p $-{\it complete} if, 
for $i=1,\ldots ,r$, 
the cone $\Rn (\degp \psi _i(R))$ is generated by 
a finite subset of 
$\degp \psi _i(I_{\p ,i})$, 
or equivalently 
$\Rn (\degp \psi _i(R))$ is equal to 
$\Rn (\degp \psi _i(I_{\p ,i}))$ 
and is finitely generated. 

\nd (ii) We say that $(\psi _i)_{i=1}^r$ is $\p $-{\it incomplete} 
if $(\psi _i)_{i=1}^r$ is not $\p $-complete. 
\end{defn}

If $(\psi _i)_{i=1}^r$ is $\p $-complete, 
then the following are true. 

\smallskip 

\nd (3$^\circ $) 
By Lemma~\ref{lem:Gordan}, 
$\degp \psi _i(R)$ is finitely generated 
for $i=1,\ldots ,r$.

\smallskip

\nd (4$^\circ $) 
Put $\psi ':=\psi _1+\psi _2$. 
Then, $\degp \psi _i(I)\cap \degp \psi (I)\subset 
\degp \psi _i(I)\cap \degp \psi '(I)$ 
holds for $i=1,2$. 
Hence, 
$(\psi _i)_{i=1}^2$ is also $\p $-complete 
in view of (1$^{\circ }$).

\smallskip

Now, let $\mathscr{A}$ be the $k$-subalgebra of $\kxx $ 
constructed in Section~\ref{sect:main}, 
and $(\phi _i)_{i=1}^r$ as in (A2).

\begin{thm}\label{thm:incomplete}
If $\degp \mathscr{A}$ is finitely generated, 
then $(\phi _i)_{i=1}^r$ is $\p $-complete. 
\end{thm}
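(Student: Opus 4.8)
The plan is to pin down exactly where the normal vector $\omega _i$ vanishes on the monoid $\degp \mathscr{A}$, then run a cone-face argument, and finally use a multiplicative trick inside $J$ to recover the leading exponents $\degp \phi _i(p)$ that are invisible in $\degp \phi (I)$. First I would record the shape of $\degp \mathscr{A}$: by (\ref{eq:Ipi}) together with the partition $I\sm \zs =\bigsqcup _{i=1}^rI_{\p ,i}$ one has $\degp \phi (I)=\bigsqcup _{i=1}^r\degp \phi _i(I_{\p ,i})$, so (\ref{eq:deg A}) gives
\[
\degp \mathscr{A}=\bzs \sqcup \Big(\bigsqcup _{j=1}^r\degp \phi _j(I_{\p ,j})\Big)\sqcup \degp J .
\]
Fix $i$ and let $\omega _i$ be the normal vector of $C_i$. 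Since $\degp \phi _j(I_{\p ,j})\subset C_j^{\circ }$ and $\degp J\subset C^{\circ }$ by Lemma~\ref{lem:J}, the defining inequalities (\ref{eq:C^circ}) show $\omega _i\cdot \ba \ge 0$ for all $\ba \in \degp \mathscr{A}$, with equality precisely on $\bzs \sqcup \degp \phi _i(I_{\p ,i})$: the form $\omega _i$ is positive on $C_j^{\circ }$ for $j\ne i$ and on $C^{\circ }$, and vanishes on $C_i^{\circ }$ and at $\zero $.

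Next I would pass to cones. Put $K:=\Rn (\degp \mathscr{A})$. The hypothesis that $\degp \mathscr{A}$ is finitely generated makes $K=\Rn (F)$ for a finite $F\subset \degp \mathscr{A}$, hence polyhedral and closed by the Farkas--Minkowski--Weyl theorem. Because $\omega _i\ge 0$ on $F$, the supporting face $K_i:=K\cap \{ \ba \mid \omega _i\cdot \ba =0\} $ is generated by those $\ba \in F$ with $\omega _i\cdot \ba =0$, which by the previous step form a finite subset of $\bzs \sqcup \degp \phi _i(I_{\p ,i})$. As $\degp \phi _i(I_{\p ,i})\subset \{ \omega _i=0\} $ as well, I would conclude that $K_i=\Rn (\degp \phi _i(I_{\p ,i}))$ and that this cone is generated by a finite subset of $\degp \phi _i(I_{\p ,i})$.

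The heart of the argument, and the step I expect to be the main obstacle, is the reverse inclusion $\Rn (\degp \phi _i(R))\subset K_i$, equivalently $\degp \phi _i(p)\in K_i$ for \emph{every} $p\in I\sm \zs $ and not only for $p\in I_{\p ,i}$. The difficulty is structural: $\mathscr{A}$ only remembers the symmetric sums $\phi (p)=\sum _j\phi _j(p)$, so an individual exponent $\degp \phi _i(p)$ does not appear in $\degp \phi (I)$ once $p\notin I_{\p ,i}$. To expose it I would multiply into the ideal $J$: choosing $j\ne i$ (possible since $r\ge 2$) and any fixed $q\in I\sm \zs $, one has $\phi _i(p)^m\phi _j(q)=\phi _i(p^m)\phi _j(q)\in \phi _i(I)\phi _j(I)\subset J\subset \mathscr{A}$ for all $m\ge 1$, whence $m\,\degp \phi _i(p)+\degp \phi _j(q)\in \degp \mathscr{A}\subset K$. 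Dividing by $m$ and letting $m\to \infty $, closedness of $K$ forces $\degp \phi _i(p)\in K$; and since $\degp \phi _i(p)\in C_i^{\circ }$ satisfies $\omega _i\cdot \degp \phi _i(p)=0$, this places it in $K_i$.

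Finally, since the origin is irrelevant for the generated cone, $\Rn (\degp \phi _i(R))=\Rn (\degp \phi _i(I))$, and the two inclusions $\degp \phi _i(I_{\p ,i})\subset \degp \phi _i(I)$ and $\degp \phi _i(I)\subset K_i$ combine with the preceding step to give $\Rn (\degp \phi _i(R))=K_i=\Rn (\degp \phi _i(I_{\p ,i}))$, a cone generated by a finite subset of $\degp \phi _i(I_{\p ,i})$. As $i$ was arbitrary, $(\phi _i)_{i=1}^r$ is $\p $-complete.
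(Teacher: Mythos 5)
Your proof is correct and follows essentially the same route as the paper: decompose $\degp \mathscr{A}$ via (\ref{eq:deg A}) and (\ref{eq:Ipi}), identify $C_i\cap \Rn (\degp \mathscr{A})$ as the face cut out by $\omega _i$ (equal to $\Rn (\degp \phi _i(I_{\p ,i}))$ because $\omega _i$ vanishes on $\degp \mathscr{A}$ exactly at $\bzs \sqcup \degp \phi _i(I_{\p ,i})$), and then use $\phi _i(p)^m\phi _j(q)\in J$ together with closedness of the polyhedral cone to force $\degp \phi _i(I)$ into that face. The only cosmetic difference is that you invoke the fact that a face of $\Rn F$ is generated by $F\cap \{ \omega _i=0\} $, where the paper instead cites polyhedrality of the face and the set-level equality $C_i\cap \degp \mathscr{A}=\bzs \cup \degp \phi _i(I_{\p ,i})$; both are valid.
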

\begin{proof}
Fix $i\in \{ 1,\ldots ,r\} $. 
Since $\Rn (\degp \mathscr{A})\subset \Rn (\supp \mathscr{B})\subset C$ 
by Lemma~\ref{lem:J}, 
we see that 
$C_i\cap \Rn (\degp \mathscr{A})$ 
is a face of $\Rn (\degp \mathscr{A})$ 
with normal vector $\omega _i$. 
Since $\degp \mathscr{A}$ 
is finitely generated by assumption, 
the cone $\Rn (\degp \mathscr{A})$ is finitely generated, 
and hence polyhedral. 
Thus, 
the cone 
$C_i\cap \Rn (\degp \mathscr{A})$ is also polyhedral, 
and therefore finitely generated. 
We show that
\begin{equation}\label{eq:finite generation}
\degp \phi _i(I)\subset 
C_i\cap \Rn (\degp \mathscr{A})
=\Rn (\degp \phi _i(I_{\p ,i})). 
\end{equation}
Then, 
it follows that $\Rn (\degp \phi _i(R))$ 
is equal to $\Rn (\degp \phi _i(I_{\p ,i}))$ 
and is finitely generated, 
since $\degp \phi _i(R)=\bzs \sqcup \degp \phi _i(I)
\supset \degp \phi _i(I_{\p ,i})$,

We have 
$\degp \phi _i(I)\subset 
\supp \phi _i(I)\subset C_i$ by (A2). 
We check 
$\degp \phi _i(I)\subset \Rn (\degp \mathscr{A})$. 
Pick any $\ba \in \degp \phi _i(I)$, 
and $f\in \phi _i(I)\sm \zs $ with $\degp f=\ba $. 
Since $r\ge 2$, 
we can find $j\in \{ 1,\ldots ,r\} \sm \{ i\} $ 
and $g\in \phi _j(I)\sm \zs $. 
Then, for each $l\ge 1$, 
we have $f^lg\in J\subset \mathscr{A}$. 
Hence, 
$\ba +l^{-1}\degp g=l^{-1}\degp f^lg$ belongs to 
$\Rn (\degp \mathscr{A})$. 
Since $\Rn (\degp \mathscr{A})$ is a polyhedral cone, 
$\Rn (\degp \mathscr{A})$ is a closed subset of $\R ^n$. 
Thus, 
$\lim _{l\to \infty }(\ba +l^{-1}\degp g)=\ba $ 
belongs to $\Rn (\degp \mathscr{A})$. 
This proves ``$\subset $" in (\ref{eq:finite generation}).

Next, 
note that $C_i\cap \degp \phi (I)\subset 
C_i\cap \bigcup _{j=1}^r\degp \phi _j(I)=\degp \phi _i(I)
\subset C_i$, 
since 
$C_i\cap \degp \phi _j(I)
\subset C_i\cap C_j^{\circ }
=\emptyset $ for any $j\ne i$. 
Hence, we get 
$C_i\cap \degp \phi (I)
=\degp \phi _i(I)\cap \degp \phi (I)
=\degp \phi _i(I_{\p ,i})$ by (\ref{eq:Ipi}). 
Since $C_i\cap \degp J\subset C_i\cap C^{\circ }=\emptyset $ 
by Lemma~\ref{lem:J}, 
we see from (\ref{eq:deg A}) that 
$C_i\cap \degp \mathscr{A}
=\bzs \cup \degp \phi _i(I_{\p ,i})$. 
This implies 
``$=$" in (\ref{eq:finite generation}). 
\end{proof}

Thanks to Theorem~\ref{thm:incomplete}, 
we are reduced to proving the following proposition.

\begin{prop}\label{prop:key}
If $\min _{\p }(\degp \psi _1(R))=\zero $, 
then $(\psi _i)_{i=1}^r$ is $\p $-incomplete. 

\end{prop}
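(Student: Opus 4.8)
The plan is to argue by contradiction: assume $(\psi _i)_{i=1}^r$ is $\p$-complete and extract from this an infinite strictly $\p$-decreasing chain in a well-ordered set. First I would reduce to $r=2$. Observation (4$^{\circ}$) shows that $\p$-completeness of the whole system forces $\p$-completeness of the pair $(\psi _1,\psi _2)$ with $\psi ':=\psi _1+\psi _2$, and the hypothesis $\min _{\p}(\degp \psi _1(R))=\zero $ concerns only $\psi _1$; so it suffices to contradict $\p$-completeness of $(\psi _1,\psi _2)$. Write $d_i:=\degp \psi _i$, a homomorphism from $(I\sm \zs ,\cdot )$ to $(\Z ^n,+)$ since $\p \in \Omega $ is additive and each $\psi _i$ is a ring homomorphism. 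Completeness at $i=1,2$ together with Lemma~\ref{lem:Gordan} makes both monoids $d_1(R)$ and $d_2(R)$ finitely generated, and the hypothesis gives $d_1(p)\succ \zero $ for every $p\in I\sm \zs $. I would record two consequences: (i) neither $I_{\p ,1}$ nor $I_{\p ,2}$ is empty, since otherwise $\Rn (d_i(I_{\p ,i}))=\bzs \ne \Rn (d_i(I))$ contradicts completeness; and (ii) $\p $ well-orders $d_1(R)$. For (ii) I would use Dickson's lemma: in any putative infinite $\p $-descending chain in the finitely generated monoid $d_1(R)\subset \{ \ba \succeq \zero \} $, two members would be comparable in the componentwise order on the exponent tuples relative to a fixed set of monoid generators, and their difference, lying in $d_1(R)\subseteq \{ \succeq \zero \} $, would force the later term to be $\succeq $ the earlier one.

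The engine of the proof is a cancellation step. Suppose $q\in I_{\p ,2}$ and that we can find $P\in I_{\p ,1}$ with $d_1(P)=d_1(q)$. Choose $c\in k^*$ equating the $\psi _1$-leading coefficients and set $h:=q-cP\in I$. If $h=0$ then $q=cP$, whence $d_2(q)=d_2(P)\prec d_1(P)=d_1(q)$, contradicting $q\in I_{\p ,2}$. If $h\ne 0$, the $\psi _1$-leading terms cancel, so $d_1(h)\prec d_1(q)$; moreover $d_2(P)\prec d_1(P)=d_1(q)\prec d_2(q)$ shows the $\psi _2$-leading terms do not cancel, so $d_2(h)=d_2(q)\succ d_1(q)\succ d_1(h)$, i.e. $h\in I_{\p ,2}$ as well. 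Thus from such a $q$ I produce $h\in I_{\p ,2}$ with $d_1(h)\prec d_1(q)$. Choosing $q\in I_{\p ,2}$ with $d_1(q)$ minimal, which is possible by (i) and (ii), this descent contradicts minimality and finishes the proof.

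The hard part is exactly the hypothesis of the cancellation step: for the minimal $q$, producing $P\in I_{\p ,1}$ with $d_1(P)=d_1(q)$, i.e. proving $d_1(q)\in d_1(I_{\p ,1})$. Completeness at $i=1$ yields only $d_1(q)\in \Rn (d_1(I_{\p ,1}))$, membership in the real cone. Passing to a power $q^m$ clears denominators and gives $d_1(q^m)\in d_1(I_{\p ,1})$, after which the same cancellation produces $h\in I_{\p ,2}$ with $d_1(h)\prec d_1(q^m)=m\,d_1(q)$; but this is too weak to contradict minimality. Bridging this gap, from cone membership to value-monoid membership at the minimal element (equivalently, forcing $m=1$), is the crux and the principal obstacle.

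This is where I expect to deploy the remaining machinery. I would use Lemma~\ref{lem:approx lem} and Proposition~\ref{prop:well-ordered} to replace $\p $ by a weight order $o(\w )$ with $\w \in \cW $, agreeing with $\p $ on a prescribed finite set $\mathscr{F}$ (containing $\zero $ and the relevant degrees of the finitely many generators involved), while preserving $\degp \psi _1(R)$ and rendering it well-ordered. The linear functional $\w $ turns $d_1$ and $d_2$ into additive real-valued functions on $I\sm \zs $ whose signs control membership in $I_{\p ,1}$ and $I_{\p ,2}$; I would use this linearity to track the multiplicity $m$ precisely and to recast the $\p $-descent above as an honestly well-founded one, so that at the minimal counterexample the multiplicity collapses to $m=1$ and the cancellation closes.
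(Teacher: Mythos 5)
Your overall architecture --- assume completeness, obtain a well-ordered value monoid $\degp \psi _1(R)$, and run a cancellation descent inside $I_{\p ,2}$ --- is close in spirit to the paper's, and the pieces you actually carry out are sound: the reduction to $r=2$ via (4$^\circ $), the Dickson-type argument that $\p $ well-orders the finitely generated monoid $\degp \psi _1(R)\subset \{ \ba \succeq \zero \} $, and the cancellation step, which is essentially part (i) of the paper's Injectivity principle (Lemma~\ref{lem:IP}). But the gap you flag yourself is genuine, and your final paragraph does not close it. Completeness only places $\degp \psi _1(q)$ in the real cone $\Rn (\degp \psi _1(I_{\p ,1}))$, and there is no mechanism to ``collapse the multiplicity to $m=1$'' at a minimal counterexample: even under completeness, a particular element of $\degp \psi _1(I)$ lying in that cone need not lie in $\degp \psi _1(I_{\p ,1})$, and replacing $\p $ by a weight order $o(\w )$ gives no control over which \emph{small} elements of the monoid are exceptional. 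So the descent cannot be launched from the single global minimum of $\{ \degp \psi _1(q)\mid q\in I_{\p ,2}\} $.

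The paper resolves this with a counting argument rather than a one-point descent. Its Finiteness principle (Lemma~\ref{lem:FP}) shows, via Gordan's lemma and a quantitative weight estimate (the threshold $u$ with $\sum _il_i\ge u$), that completeness forces the exceptional set $\degp \psi _1(R)\sm \degp \psi (I)$ to be \emph{finite} --- it controls all sufficiently deep points of the monoid simultaneously, precisely because individual shallow points cannot be controlled. On the other side, your cancellation step is run once for \emph{each} $\ba \in \degp \psi _2(I_{\p ,2})$, an infinite set by (2$^\circ $), yielding an injection $\mu $ of that infinite set into $\degp \psi _1(R)\sm \degp \psi (I)$. The contradiction is finite versus infinite, not minimality versus descent. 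Without a substitute for the Finiteness principle, your argument does not go through.
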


We prove this proposition in the remainder of this section. 
First, we show two fundamental lemmas.

\begin{lem}[Finiteness principle]\label{lem:FP}
If $\p \in o(\cW )$, 
then the following assertions hold. 

\nd {\rm (i)} 
$(\degp \psi _1(R)\sm \degp \psi (I))\cap \sum _{i=1}^s\Rn \ba _i$ 
is a finite set for any $s\ge 1$ and 
$\ba _1,\ldots ,\ba _s\in \Rn (\degp \psi _1(I_{\p ,1}))$.

\nd {\rm (ii)} 
If the cone $\Rn (\degp \psi _1(R))$ is 
generated by a finite subset of 
$\degp \psi _1(I_{\p ,1})$, 
then $\degp \psi _1(R)\sm \degp \psi (I)$ is a finite set. 
\end{lem}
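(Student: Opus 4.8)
The plan is to deduce (ii) from (i) and to prove (i) by a \emph{pumping} argument fed into a double application of Dickson's lemma. For (ii), if $\Rn (\degp \psi _1(R))$ is generated by a finite subset $\{\ba _1,\ldots ,\ba _s\}$ of $\degp \psi _1(I_{\p ,1})$, then $\degp \psi _1(R)\subset \sum _{i=1}^s\Rn \ba _i$, so $\degp \psi _1(R)\sm \degp \psi (I)$ coincides with its own intersection with $\sum _i\Rn \ba _i$, which is finite by (i). So everything rests on (i).

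I would first reduce (i) to a statement about two submonoids of $\Z ^n$. Put $D_1:=\degp \psi _1(I)$ and $M_1:=\degp \psi _1(I_{\p ,1})$; both are additive submonoids of $\Z ^n$, since $\psi _1$ is a ring homomorphism (so $\degp \psi _1$ is additive on products) and $I_{\p ,1}$ is closed under multiplication by (2$^\circ$). Using $\degp \psi _1(R)=\bzs \sqcup D_1$, the equality $\degp \psi (I)=\bigcup _i\degp \psi _i(I_{\p ,i})$ coming from (\ref{eq:Ipi}), and (1$^\circ$) together with the disjointness of the $\supp \psi _i(I)$, one checks $\degp \psi _1(R)\sm \degp \psi (I)=\bzs \sqcup (D_1\sm M_1)$. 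Hence it suffices to show $(D_1\sm M_1)\cap \sum _i\Rn \ba _i$ is finite, and since each $\ba _i\in \Rn M_1$, I may enlarge the cone and assume outright that $\ba _1,\ldots ,\ba _s\in M_1$, with witnesses $q_i\in I_{\p ,1}$ satisfying $\degp \psi _1(q_i)=\ba _i$.

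Because $\p =o(\w )$ for some $\w \in \cW $, the real-valued functions $v_l(\cdot ):=\w \cdot \degp \psi _l(\cdot )$ are additive on products, and the injectivity of $\w $ on $\Z ^n$ (exponents lying in disjoint supports) forces $v_i(p)\ne v_j(p)$ for $i\ne j$. Thus $p\in I_{\p ,1}\iff w(p):=v_1(p)-\max _{l\ne 1}v_l(p)>0$, and $w(fg)\ge w(f)+w(g)$. This gives the pumping estimate: if $\degp \psi _1(p)=\bb $ and $\e \in (\Zn )^s$ satisfies $\sum _i e_iw(q_i)>-w(p)$, then $p\prod _iq_i^{e_i}\in I_{\p ,1}$, whence $\bb +\sum _ie_i\ba _i\in M_1$. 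To finish, suppose $G:=(D_1\sm M_1)\cap \sum _i\Rn \ba _i$ is infinite. Applying Lemma~\ref{lem:Gordan} to $S:=\Z ^n\cap \sum _i\Rn \ba _i$ yields $S=\sum _i\Zn \ba _i+F$ with $F$ finite; coloring each $\bb \in G\subset S$ by its $F$-remainder, some class is infinite, so after subtracting a fixed $f$ we obtain infinitely many distinct $\bb $ with $\bb -f=\sum _in_i(\bb )\ba _i$, $\,n(\bb )\in (\Zn )^s$. By Dickson's lemma the vectors $n(\bb )$ contain an infinite coordinatewise-nondecreasing chain $n(\bb _1)\le n(\bb _2)\le \cdots $ with $\sum _in_i(\bb _t)\to \infty $. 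Fixing a witness $p_1$ of $\bb _1$ and taking $t$ large enough that $\sum _i(n_i(\bb _t)-n_i(\bb _1))w(q_i)>-w(p_1)$, the pumping estimate gives $\bb _t=\bb _1+\sum _i(n_i(\bb _t)-n_i(\bb _1))\ba _i\in M_1$, contradicting $\bb _t\in G$.

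The main obstacle is precisely this conversion of the pumping estimate into a finiteness statement: the estimate only shows that a degree failing to lie in $M_1$ enters $M_1$ \emph{after} adding a sufficiently large cone element, and to exploit this one must arrange the added increment to be a genuine nonnegative $\Z $-combination of the $\ba _i$. The two uses of Dickson's lemma — first via Lemma~\ref{lem:Gordan} to strip off a bounded remainder $f$, and then on the exponent vectors $n(\bb )$ to produce a monotone subsequence — are exactly what force the increment $\bb _t-\bb _1$ into $\sum _i\Zn \ba _i$ so that the estimate becomes applicable; the additivity of the $v_l$ and the strict separation $w(q_i)>0$ (both consequences of $\p \in o(\cW )$) are what make the estimate quantitative.
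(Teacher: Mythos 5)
Your proposal is correct and follows essentially the same route as the paper: the same reduction to generators $\ba _i\in \degp \psi _1(I_{\p ,1})$ with witnesses $q_i\in I_{\p ,1}$, the same appeal to Lemma~\ref{lem:Gordan} to write the relevant lattice points as $\sum _i\Zn \ba _i+F$ with $F$ finite, and the same $\w $-weight estimate (your superadditive $w$ is a repackaging of the paper's displayed inequality $\w \cdot (\bc -\bc _j)>0$). The only divergence is the final counting step: the paper extracts a uniform threshold $u$ so that every $\sum _il_i\ba _i+\bb $ with $\sum _il_i\ge u$ lies in $\degp \psi (I)$, leaving an explicitly finite exceptional set, whereas you argue by contradiction via Dickson's lemma on the exponent vectors --- a slightly more roundabout but equally valid finish.
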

\begin{proof}
(i) Since $\ba _1,\ldots ,\ba _s\in \Rn (\degp \psi _1(I_{\p ,1}))$, 
there exists 
a finite subset $\mathscr{F}$ of $\degp \psi _1(I_{\p ,1})$ 
such that 
$\ba _1,\ldots ,\ba _s\in \Rn \mathscr{F}$. 
Since $\sum _{i=1}^s\Rn \ba _i\subset \Rn \mathscr{F}$, 
it suffices to show 
that 
$(\degp \psi _1(R)\sm \degp \psi (I))\cap \Rn \mathscr{F}$ 
is a finite set. 
Hence, 
by replacing $\{ \ba _1,\ldots ,\ba _s\} $ with $\mathscr{F}$, 
we may assume 
$\ba _1,\ldots ,\ba _s\in \degp \psi _1(I_{\p ,1})$. 
Set $S:=\degp \psi _1(R)\cap \sum _{i=1}^s\Rn \ba _i$. 
Then, 
since $\ba _1,\ldots ,\ba _s\in S$, 
we have $\Rn S=\sum _{i=1}^s\Rn \ba _i$. 
Hence, by Lemma~\ref{lem:Gordan}, 
there exists a finite subset $F\subset S\sm \bzs \subset 
\degp \psi _1(R)\sm \bzs =\degp \psi _1(I)$ 
such that $S=\sum _{i=1}^s\Zn \ba _i+F$. 
Now, let $\w \in \cW $ be such that $\p =o(\w )$. 
For $i=1,\ldots ,s$, 
pick $p_i\in I_{\p ,1}$ with $\degp \psi _1(p_i)=\ba _i$, 
and set $\ba _{i,j}:=\degp \psi _j(p_i)$ for $j=2,\ldots ,r$. 
Then, 
$\w \cdot (\ba _i-\ba _{i,j})>0$ holds for each $i$ and $j$, 
since $\ba _{i,j}\prec \ba _i$. 
For each $\bb \in F$, 
pick $q_{\bb }\in I\sm \zs $ 
with $\degp \psi _1(q_{\bb })=\bb $, 
and set $\bb _j:=\degp \psi _j(q_{\bb })$ for $j=2,\ldots ,r$. 
Then, 
there exists $u\in \Zp $ such that, 
for any $i=1,\ldots ,s$, $j=2,\ldots ,r$ and $\bb \in F$, 
we have $\w\cdot (\ba _i-\ba _{i,j})>u^{-1}|\w \cdot (\bb -\bb _j)|$.

Now, take any 
$l_1,\ldots ,l_s\in \Zn $ with $\sum _{i=1}^sl_i\ge u$ 
and $\bb \in F$. 
We show that $\bc :=\sum _{i=1}^sl_i\ba _i+\bb $ 
belongs to $\degp \psi (I)$. 
Then, 
it follows that 
$S\sm \degp \psi (I)
=(\sum _{i=1}^s\Zn \ba _i+F)\sm \degp \psi (I)$ 
is a finite set. 
For $p:=p_1^{l_1}\cdots p_s^{l_s}q_{\bb }\in I\sm \zs $, 
we have $\degp \psi _1(p)=\bc $, 
and $\bc _j:=\degp \psi _j(p)=\sum _{i=1}^sl_i\ba _{i,j}+\bb _j$ 
for $j=2,\ldots ,r$. 
Since 
\begin{align*}
\w \cdot (\bc -\bc _j)
&=\sum _{i=1}^sl_i\w \cdot (\ba _i-\ba _{i,j})+\w \cdot (\bb -\bb _j)\\
&>\sum _{i=1}^sl_iu^{-1}|\w \cdot (\bb -\bb _j)|+\w \cdot (\bb -\bb _j)
\ge 0
\end{align*}
by the choice of $u$, 
we get 
$\bc =\max _{\p }\{ \bc ,\bc _2,\ldots ,\bc _r\} 
=\degp \psi (p)\in \degp \psi (I)$.

(ii) The assertion follows from (i). 
\end{proof}

Next, 
for each $\ba \in \degp \psi _2(I_{\p ,2})$, 
we define 
$$
M^{\ba }_{\p }:=\{ \degp \psi _1(p)\mid p\in I_{\p ,2},\ 
\degp \psi _2(p)=\ba \} \subset \degp \psi _1(I). 
$$ 
By definition, 
$\bb \prec \ba $ holds for each $\bb \in M^{\ba }_{\p }$.

\begin{lem}[Injectivity principle]\label{lem:IP}
If there exists $\mu (\ba ):=\min _{\p }M^{\ba }_{\p }$ for each 
$\ba \in \degp \psi _2(I_{\p ,2})$, 
then the following assertions hold.

\nd {\rm (i)} 
$\mu (\ba )\not\in \degp \psi (I)$ 
for each $\ba \in \degp \psi _2(I_{\p ,2})$.

\nd {\rm (ii)} 
The map 
$\mu :\degp \psi _2(I_{\p ,2})\to \degp \psi _1(R)\sm \degp \psi (I)$ 
is injective. 

\nd {\rm (iii)} 
If $I_{\p ,2}\ne \emptyset $, 
then $\degp \psi _1(R)\sm \degp \psi (I)$ is an infinite set. 
\end{lem}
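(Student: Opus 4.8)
The three assertions are proved by a single \emph{cancellation} device, and (i) is the crux. The plan is to fix $\ba \in \degp \psi _2(I_{\p ,2})$, write $\bb :=\mu (\ba )=\degp \psi _1(p)$ for some $p\in I_{\p ,2}$ with $\degp \psi _2(p)=\ba $, and argue by contradiction. Note at the outset that $\bb \in M^{\ba }_{\p }\subset \degp \psi _1(I)\subset \supp \psi _1(I)$, and recall $\bb \prec \ba $.

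For (i), I would suppose $\bb \in \degp \psi (I)$, so that $\bb =\degp \psi (q)$ for some $q\in I\sm \zs $. Writing $q\in I_{\p ,i}$ (the unique index, since $I\sm \zs =\bigsqcup _iI_{\p ,i}$), \eqref{eq:Ipi} gives $\bb =\degp \psi _i(q)\in \supp \psi _i(I)$. The key structural point is that the disjointness $\supp \psi _1(I)\cap \supp \psi _i(I)=\emptyset $ for $i\ne 1$ then forces $i=1$; that is, $\degp \psi _1(q)=\bb $ and $\degp \psi _j(q)\prec \bb $ for all $j\ne 1$. Now choose $c\in k^*$ so that the $\x ^{\bb }$-terms of $\psi _1(p)$ and $c\psi _1(q)$ cancel, and set $p':=p-cq\in I$. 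I would then verify: $p'\ne 0$ (otherwise $p=cq$ would give $\ba =\degp \psi _2(q)\prec \bb \prec \ba $), so $\degp \psi _1(p')\prec \bb $; and $\degp \psi _2(p')=\ba $, because $\degp \psi _2(q)\prec \bb \prec \ba $ leaves the leading term of $\psi _2(p)$ intact. Finally, checking $\degp \psi _j(p')\prec \ba $ for every $j\ne 2$ (for $j=1$ this is $\prec \bb \prec \ba $, and for $j\ne 1,2$ both $\degp \psi _j(p)$ and $\degp \psi _j(q)$ are $\prec \ba $) shows $p'\in I_{\p ,2}$ with $\degp \psi _2(p')=\ba $. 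Hence $\degp \psi _1(p')\in M^{\ba }_{\p }$ lies strictly below $\bb =\min _{\p }M^{\ba }_{\p }$, the desired contradiction.

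For (ii), the same device applies. The map is well-defined into $\degp \psi _1(R)\sm \degp \psi (I)$ by (i) together with $\degp \psi _1(R)=\bzs \sqcup \degp \psi _1(I)$. To prove injectivity, suppose $\mu (\ba )=\mu (\ba ')=:\bb $ with $\ba \ne \ba '$, say $\ba \succ \ba '$, with witnesses $p,p'\in I_{\p ,2}$; cancelling the common $\x ^{\bb }$-term of $\psi _1(p)$ and $\psi _1(p')$ produces $p'':=p-cp'\in I_{\p ,2}$ with $\degp \psi _2(p'')=\ba $ but $\degp \psi _1(p'')\prec \bb $, again contradicting the minimality of $\mu (\ba )$. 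By symmetry $\ba =\ba '$. Assertion (iii) is then formal: if $I_{\p ,2}\ne \emptyset $ then $\degp \psi _2(I_{\p ,2})$ is infinite by (2$^\circ $), and the injection $\mu $ forces $\degp \psi _1(R)\sm \degp \psi (I)$ to be infinite as well.

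The main obstacle I anticipate is the bookkeeping in (i) and (ii) that guarantees the new element $p'$ (resp.\ $p''$) still lies in $I_{\p ,2}$ with unchanged leading $\psi _2$-degree $\ba $: one must control all the degrees $\degp \psi _j(\cdot )$ simultaneously after the cancellation and rule out the degenerate case $p'=0$. The conceptually decisive step, however, is the reduction to $i=1$ in (i), which is where the hypothesis that the supports $\supp \psi _i(I)$ are pairwise disjoint does the real work.
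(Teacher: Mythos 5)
Your proposal is correct and follows essentially the same route as the paper: reduce to $q\in I_{\p ,1}$ (you re-derive the paper's observation (1$^\circ$) from the disjointness of the supports instead of citing it), cancel the common leading term of $\psi _1(p)$ and $\psi _1(q)$ (resp.\ $\psi _1(p')$), and check that the difference still lies in $I_{\p ,2}$ with $\psi_2$-degree $\ba$, contradicting the minimality of $\mu (\ba )$. The only cosmetic differences are your explicit treatment of the degenerate case $p'=0$ and your use of a scalar $c$ where the paper normalizes via $\rinp$.
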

\begin{proof}
(i) Suppose that 
$\mu (\ba )\in \degp \psi (I)$ 
for some $\ba \in \degp \psi _2(I_{\p ,2})$. 
Then, 
$\mu (\ba )$ belongs to 
$\degp \psi _1(I)\cap \degp \psi (I)=\degp \psi _1(I_{\p ,1})$ 
by (1$^\circ $). 
Choose $p\in I_{\p ,2}$ and $q\in I_{\p ,1}$ 
so that $\degp \psi _2(p)=\ba $ and 
$\rinp \psi _1(p)=\rinp \psi _1(q)=\x ^{\mu (\ba )}$. 
Then, 
$\degp \psi _1(p-q)=\degp (\psi _1(p)-\psi _1(q))$ 
is less than $\mu (\ba )$. 
We claim that 
$p-q\in I_{\p ,2}$ and $\degp \psi _2(p-q)=\ba $, 
namely, $\degp \psi _2(p-q)=\ba $ and 
$\degp \psi _i(p-q)\prec \ba $ for all $i\ne 2$. 
In fact, we have 
$\ba =\degp \psi _2(p)\succ \degp \psi _i(p)$ for all $i\ne 2$ 
since $p\in I_{\p ,2}$, 
and 
$\ba \succ \mu (\ba )=\degp \psi _1(q)\succeq \degp \psi _j(q)$ 
for all $j$ since $q\in I_{\p ,1}$. 
This contradicts the minimality of $\mu (\ba )$.

(ii) Suppose that $\mu (\ba )=\mu (\ba ')$ for some 
$\ba ,\ba '\in \degp \psi _2(I_{\p ,2})$ with $\ba \succ \ba '$. 
Choose $p,p'\in I_{\p ,2}$ 
so that $\degp \psi _2(p)=\ba $, $\degp \psi _2(p')=\ba '$ 
and $\rinp \psi _1(p)=\rinp \psi _1(p')=\x ^{\mu (\ba )}$. 
Then, 
we have $\degp \psi _1(p-p')\prec \mu (\ba )$. 
We can also check that $p-p'\in I_{\p ,2}$ and $\degp \psi _2(p-p')=\ba $ 
as in the proof of (i), 
by noting 
$\ba =\degp \psi _2(p)\succ \degp \psi _i(p)$ 
for all $i\ne 2$, 
and $\ba \succ \ba '=\degp \psi _2(p')\succeq \degp \psi _j(p')$ 
for all $j$. 
This contradicts the minimality of $\mu (\ba )$.

(iii) follows from (ii) and $(2^{\circ })$. 
\end{proof}

Now, 
we can give a practical sufficient condition 
for $\p $-incompleteness.

\begin{thm}\label{thm:incompleteness criterion}
Let $\p \in \Omega $. 
If there exists a finite subset $\mathscr{F}$ of $\Z ^n$ 
such that the following conditions 
hold for each $\p '\in o(\cW _{\mathscr{F},\p })$, 
then $(\psi _i)_{i=1}^r$ is $\p $-incomplete.

\nd $(1)$ $\Rn (\deg _{\p '}\psi _1(R))=\Rn (\degp \psi _1(R))$. 

\nd $(2)$ There exists $\min _{\p '}M^{\ba }_{\p '}$ 
for each $\ba \in \deg _{\p '}\psi _2(I_{\p ',2})$. 

\end{thm}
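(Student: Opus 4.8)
The plan is to argue by contradiction: I would assume $(\psi_i)_{i=1}^r$ is $\p$-complete and exhibit a single set that is forced to be both finite and infinite. The two opposing engines are already in place. The Finiteness principle (Lemma~\ref{lem:FP}(ii)) turns $\p$-completeness of the first slot into finiteness of $\degp\psi_1(R)\sm\degp\psi(I)$, while the Injectivity principle (Lemma~\ref{lem:IP}(iii)) turns the existence of the minima $\min_\p M^\ba_\p$ together with $I_{\p,2}\ne\emptyset$ into infiniteness of the very same set. Both principles need the order to live in $o(\cW)$, which $\p$ itself need not; this is exactly what hypotheses $(1)$ and $(2)$ are for — they let me push the whole picture to a nearby $\p'=o(\w)$ with $\w\in\cW_{\mathscr{F},\p}$. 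Two preliminary reductions would streamline things. First, I am free to replace $\mathscr{F}$ by any finite $\mathscr{F}'\supset\mathscr{F}$, since $o(\cW_{\mathscr{F}',\p})\subset o(\cW_{\mathscr{F},\p})$ keeps $(1)$ and $(2)$ valid. Second, if $I_{\p,2}=\emptyset$ then $\degp\psi_2(I_{\p,2})=\emptyset$, whereas $\Rn(\degp\psi_2(R))\ne\bzs$ because $\supp\psi_2(I)\subset\Z^n\sm\bzs$ is nonempty; so the $i=2$ clause of $\p$-completeness already fails and there is nothing left to prove. Hence I may assume $I_{\p,2}\ne\emptyset$ and fix a witness $p_0\in I_{\p,2}$.

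Next I would record the completeness data over a finite set. Using $\p$-completeness for $i=1$, write $\Rn(\degp\psi_1(R))=\Rn G$ with a finite $G=\{\degp\psi_1(p_t)\}_t$, $p_t\in I_{\p,1}$. I would then enlarge $\mathscr{F}$ to contain $\supp\psi_j(p_t)$ and $\supp\psi_j(p_0)$ for all $j$ and $t$, pick $\w\in\cW_{\mathscr{F},\p}$ (nonempty by Lemma~\ref{lem:approx lem}), and set $\p':=o(\w)\in o(\cW)$. Because $\w$ reproduces $\p$ on $\mathscr{F}$, the $\w$-largest point of each $\supp\psi_j(q)$ for $q\in\{p_0\}\cup\{p_t\}_t$ equals its $\p$-largest point, and the $\p'$-comparisons among these leading exponents agree with the $\p$-comparisons. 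Thus $\deg_{\p'}\psi_j(q)=\degp\psi_j(q)$ for these $q$, which yields $p_0\in I_{\p',2}$ (so $I_{\p',2}\ne\emptyset$) and $p_t\in I_{\p',1}$ with $\deg_{\p'}\psi_1(p_t)=\degp\psi_1(p_t)$; in particular $G\subset\deg_{\p'}\psi_1(I_{\p',1})$.

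Finally I would run both principles at $\p'$. By $(1)$, $\Rn(\deg_{\p'}\psi_1(R))=\Rn(\degp\psi_1(R))=\Rn G$, and since the finite $G$ lies in $\deg_{\p'}\psi_1(I_{\p',1})$, Lemma~\ref{lem:FP}(ii) (with $\p'$ in place of $\p$) makes $\deg_{\p'}\psi_1(R)\sm\deg_{\p'}\psi(I)$ finite. Meanwhile $(2)$ supplies the minima $\min_{\p'}M^\ba_{\p'}$ for every $\ba\in\deg_{\p'}\psi_2(I_{\p',2})$, so, as $I_{\p',2}\ne\emptyset$, Lemma~\ref{lem:IP}(iii) makes the same set $\deg_{\p'}\psi_1(R)\sm\deg_{\p'}\psi(I)$ infinite — a contradiction. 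The hard part will be the transport step of the middle paragraph: I must verify that enlarging $\mathscr{F}$ to pin down only the finitely many relevant leading terms genuinely forces $\deg_{\p'}\psi_j(q)=\degp\psi_j(q)$ and preserves the membership relations $p_t\in I_{\p',1}$ and $p_0\in I_{\p',2}$, so that both the completeness generators $G$ and the $\psi_2$-witness $p_0$ survive unchanged into an order $\p'\in o(\cW_{\mathscr{F},\p})$ where Lemmas~\ref{lem:FP} and~\ref{lem:IP} actually apply.
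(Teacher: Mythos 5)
Your proposal is correct and follows essentially the same route as the paper: assume $\p$-completeness, enlarge $\mathscr{F}$ by the supports $\supp\psi_j(p_t)$ of the cone generators and of a witness $p_0\in I_{\p ,2}$ (the paper's set $\mathscr{G}$), pass to $\p '=o(\w )$ with $\w \in \cW _{\mathscr{G},\p }$ so that the generators land in $I_{\p ',1}$ and $p_0$ in $I_{\p ',2}$, and then derive the contradiction by applying Lemma~\ref{lem:FP}~(ii) and Lemma~\ref{lem:IP}~(iii) to the set $\deg _{\p '}\psi _1(R)\sm \deg _{\p '}\psi (I)$. The transport step you flag as the remaining worry is exactly what the definition of $\cW _{\mathscr{G},\p }$ guarantees, and the paper treats it just as briefly.
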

\begin{proof}
Suppose that $(\psi _i)_{i=1}^r$ is $\p $-complete. 
Then, 
we have $\Rn (\degp \psi _1(R))=\sum _{i=1}^s\Rn (\degp \psi _1(p_i))$ 
for some $p_1,\ldots ,p_s\in I_{\p ,1}$, 
and $I_{\p ,2}\ne \emptyset $. 
Pick any $p_0\in I_{\p ,2}$, 
and set 
$\mathscr{G}
:=\mathscr{F}\cup \bigcup _{i=1}^r\bigcup _{j=0}^s\supp \psi _i(p_j)$.

Now, 
let $\p '\in o(\cW _{\mathscr{G},\p })\subset o(\cW _{\mathscr{F},\p })$. 
Then, we have 
$p_i\in I_{\p ',1}$ and $\degpp \psi _1(p_i)=\degp \psi _1(p_i)$ 
for $i=1,\ldots ,s$, 
and $p_0\in I_{\p ',2}$. 
Since $\p '\in o(\cW _{\mathscr{F},\p })$, 
we also have 
\begin{align*}
&\Rn (\degpp \psi _1(R))=\Rn (\degp \psi _1(R))\\
&\quad =\sum _{i=1}^s\Rn (\degp \psi _1(p_i)) 
=\sum _{i=1}^s\Rn (\degpp \psi _1(p_i))
\end{align*}
by (1). 
Hence, 
$\degpp \psi _1(R)\sm \degpp \psi (I)$ is a finite set 
because of Lemma~\ref{lem:FP} (ii). 
On the other hand, 
since $I_{\p ',2}\ne \emptyset $, 
we know by (2) and Lemma~\ref{lem:IP} (iii) that 
$\degpp \psi _1(R)\sm \degpp \psi (I)$ is an infinite set. 
This is a contradiction. 
\end{proof}

\begin{proof}[Proof of Proposition~$\ref{prop:key}$]
In view of (3$^\circ $), 
we may assume that $\degp \psi _1(R)$ is finitely generated. 
Since 
$\min _{\p }(\degp \psi _1(R))=\zero $ by assumption, 
we can find a finite subset $\mathscr{F}$ of $\Z ^n$ 
as in Proposition~\ref{prop:well-ordered} for $P:=\psi _1(R)$. 
We claim that (1) and (2) in Theorem~\ref{thm:incompleteness criterion} 
hold for each $\p '\in o(\cW _{\mathscr{F},\p })$. 
Actually, 
(1) follows from 
$\degpp P=\degp P$, 
and (2) follows from its well-orderedness. 
Therefore, 
$(\psi _i)_{i=1}^r$ is $\p $-incomplete 
thanks to Theorem~\ref{thm:incompleteness criterion}. 
\end{proof}

\section{Examples}\label{sect:example}
\setcounter{equation}{0}

In this section, 
we give some examples of interest. 
We also discuss the cardinality of 
the set of initial algebras 
shortly. 
For all $\mathscr{A}$ given in this section, 
$\rinp \mathscr{A}$ are not finitely generated 
if $\p $ is a monomial order 
thanks to Corollary~\ref{cor:main}.

First, 
we give an example in which 
$\degp \mathscr{A}$ is finitely generated 
for some $\p \in \Omega $.

\begin{example}\rm
Assume that $n=2$, 
and let $C$, $C_1$ and $C_2$ be as in Example~\ref{ex:basic}. 
Set $I:=(x^2+x)k[x]$ and $R:=I+k$. 
Then, 
(A1) is satisfied, 
because $k$-subalgebras of $k[x]$ are always finitely generated. 
We define homomorphisms $\phi _1',\phi _2':k[x]\to \kxx $ 
of $k$-algebras by $\phi _1'(x)=-x_1-1$ and $\phi _2'(x)=x_2$. 
Then, $\phi _i:=\phi _i'|_R$ for $i=1,2$ satisfy (A2), 
since $\phi _i(x^2+x)=x_i^2+x_i$. 
Take any $U\subset \kxx $ as in (A3).

Now, 
let $\p \in \Omega $ be such that 
$\e _1,\e _2\prec \zero $ 
and $l_1\e _2\prec \e _1$ 
and $l_2\e _1\prec \e _2$ 
for some $l_1,l_2\ge 1$, 
say $\p =o(-\w )$ 
with $\w \in (\Rp )^2\cap \cW $. 
We show that $\e _1,\e _2\in \deg _{\p }\mathscr{A}$, 
which implies $\deg _{\p }\mathscr{A}=\Zn \e _1+\Zn \e _2$. 
We have $\degp f=\e _1$, 
where 
$$
f:=\phi ((x^2+x)x^{l_1})
=(x_1^2+x_1)(-x_1-1)^{l_1}+(x_2^2+x_2)x_2^{l_1}
\in \mathscr{A}. 
$$
Take $c_1,\ldots ,c_{l_2-1}\in k$ 
so that $x_1^2+x_1-\sum _{i=1}^{l_2-1}c_i(x_1^2+x_1)^i(x_1+1)
\in x_1^{l_2}k[x_1]$ 
and set 
$p:=x^2+x+\sum _{i=1}^{l_2-1}c_i(x^2+x)^ix\in I$. 
Then, we have $\degp \phi (p)=\degp \phi _2(p)=\e _2$. 
This proves $\e _1,\e _2\in \deg _{\p }\mathscr{A}$. 
\end{example}

Examples~\ref{example:Hanoi} and \ref{example:autom} below 
are generalizations of Example~\ref{ex:RS}.

\begin{example}\label{example:Hanoi}\rm 
Let $C\subset (\Rn )^n$ be a rational polyhedral cone 
which is not contained in a line, 
and let $\ba _1,\ldots ,\ba _r\in (\Zn )^n\cap C$ 
be minimal generators of $C$, 
where $r\ge 2$. 
Take $C_i:=\Rn \e _i$ for $i=1,\ldots ,r$. 
Then, we have $C_i^{\circ }:=\Rp \e _i$ for each $i$. 
Let $R=k[x]$ and $I=xk[x]$, 
and define $\phi _i:R\ni p(x)\mapsto p(\x ^{\ba _i})\in \kx $ 
for $i=1,\ldots ,r$. 
Then, (A1) and (A2) hold. 
Take $U\subset \kx $ to be a finite set of monomials as in (A3). 
In this case, we have 
$$
\mathscr{A}=k+\phi (I)+J
=k+\sum _{l\ge 1}k(\x ^{l\ba _1}+\cdots +\x ^{l\ba _r})+J, 
$$
where $J$ is the ideal of 
$\mathscr{B}=k[\{ \x ^{\ba _1},\ldots ,\x ^{\ba _r}\} \cup U]$ 
generated by $\x ^{\ba _i+\ba _j}$ for $1\le i<j\le r$ and $U$. 
Now, pick any $\p \in \Omega $. 
Since $J$ is generated by monomials as a $k$-vector space, 
we have $\rinp J=J$. 
Hence, we know that 
$\rinp \mathscr{A}=k+\sum _{l\ge 1}k\x ^{l\ba _i}+J
=k[\x ^{\ba _i}]+J$ by (\ref{eq:deg A}), 
where $1\le i\le r$ is such that 
$\ba _i=\max _{\p }\{ \ba _1,\ldots ,\ba _r\} $. 
Conversely, 
for each $i$ and fixed $\w \in \cW $, 
we can find $\lambda \gg 0$ such that 
$\w -\lambda \omega _i\in \cW $ 
and 
$\ba _i=\max _{\p _i}\{ \ba _1,\ldots ,\ba _r\} $, 
where $\omega _i$ is a normal vector of the face $C_i$ of $C$ 
and $\p _i:=o(\w -\lambda \omega _i)$, 
since $\{ \lambda \in \Rp \mid \w -\lambda \omega _i\not\in \cW \} $ 
is at most countable. 
Therefore, we get $\# \{ \rinp \mathscr{A}\mid \p \in \Omega \} =r$. 
\end{example}

Finally, 
we give an example 
for which the set of initial algebras is continuum. 
We note that $\# \Omega =\# \R $ if $n\ge 2$, 
since $\# o(\cW )=\# \R $, 
and each element of $\Omega $ is determined by 
at most $n$ elements of $\R ^n$ (cf.~\cite{Robbiano}). 
Hence, 
$\# \{ \rinp A\mid \p \in \Omega \} \le \# \R $ 
holds for any $A\subset \kxx $.

The example is realized 
in $\kxy :=k[x_1,\ldots ,x_n,y_1,\ldots ,y_n]$, 
the polynomial ring in $2n$ variables over $k$. 
We fix a monomial order $\preceq $ on $\kxy $. 
For each $\lambda >0$, 
we set $\w _{\lambda }:=(\e _1,\lambda \e _n)\in \R ^n\times \R ^n$. 
Then, 
we can define a monomial order $\preceq _{\lambda }$ on $\kxy $ 
by $\ba \preceq _{\lambda }\bb $ if 
$\w _{\lambda }\cdot \ba <\w _{\lambda }\cdot \bb $, 
or $\w _{\lambda }\cdot \ba =\w _{\lambda }\cdot \bb $ 
and $\ba \preceq \bb $ for each $\ba ,\bb \in \Z ^n\times \Z ^n$.

\begin{example}\label{example:autom}\rm 
Let $C:=(\Rn )^n\times (\Rn )^n$, 
$C_1:=(\Rn )^n\times \bzs $ 
and $C_2:=\bzs \times (\Rn )^n$. 
Then, 
we have $C_i^{\circ }=C_i\sm \{ (\zero ,\zero )\} $ for $i=1,2$. 
Let $R=\kx $ and $I=(x_1,\ldots ,x_n)$, 
which satisfy (A1). 
We pick an automorphism $\theta $ 
of the $k$-algebra $\kx $ with $\theta (I)=I$, 
and define homomorphisms 
$\phi _1,\phi _2:R\to \kxy $ 
of $k$-algebras 
by $\phi _1(x_i)=\theta (x_i)$ 
and $\phi _2(x_i)=y_i$ for $i=1,\ldots ,n$. 
Since $\phi _j(x_i)$'s have no constant terms, 
(A2) is satisfied. 
Set $U:=\emptyset $. 
Then, 
noting $\phi _1(I)=\theta (I)=I$, 
we can check that $\mathscr{B}=\kxy $, 
$J=(\{ x_iy_j\mid i,j\in \{ 1,\ldots ,n\}\} )$, 
and 
$$
\mathscr{A}=k[\theta (x_1)+y_1,\ldots ,\theta (x_n)+y_n]+J. 
$$

Now, assume that $n\ge 2$ and $\theta =\id _{\kx }$. 
We show that 
$\rin _{\p _{\lambda }}\!\mathscr{A}
\ne \rin _{\p _{\mu }}\!\mathscr{A}$ 
for each $\lambda >\mu >0$. 
This implies that 
$$
\# \{ \rin _{\p _{\lambda }}\!\mathscr{A}\mid \lambda >0\} 
=\# \{ \rinp \mathscr{A}\mid \p \in \Omega _0\} 
=\# \{ \rinp \mathscr{A}\mid \p \in \Omega \} 
=\#\R .
$$
Suppose that 
$\rin _{\p _{\lambda }}\!\mathscr{A}
=\rin _{\p _{\mu }}\!\mathscr{A}$ 
for some $\lambda >\mu >0$. 
Then, 
we have 
$\rin _{\p _{\lambda }}\!\phi (I)
=\rin _{\p _{\mu }}\!\phi (I)$ 
by (\ref{eq:deg A}). 
The $k$-vector space 
$\phi (I)=\{ p+\phi _2(p)\mid p\in I\} $ 
has a basis 
$$
\{ f_{\bi }
:=x_1^{i_1}\cdots x_n^{i_n}+y_1^{i_1}\cdots y_n^{i_n}\mid 
\bi =(i_1,\ldots ,i_n)\in (\Zn )^n\sm \bzs \} . 
$$
Since $\supp f_{\bi }\cap \supp f_{\bj }=\emptyset $ if $\bi \ne \bj $, 
it follows that 
$\rin _{\p _{\lambda }}f_{\bi }
=\rin _{\p _{\mu }}f_{\bi }$ 
for all $\bi \in (\Zn )^n\sm \bzs $. 
However, 
if $\bi =a\e _1+b\e _n$ for 
$a,b\in \Zp $ with $\lambda >a/b>\mu $, 
then we have $\rin _{\p _\lambda }f_{\bi }=y_1^ay_n^b$ 
and $\rin _{\p _\mu }f_{\bi }=x_1^ax_n^b$. 
This is a contradiction. 
\end{example}

The author proved that, 
if the transcendence degree of $R$ over $k$ is at least two, 
and if there exist $1\le i<j\le r$ 
such that the set $\Omega '$ of 
$\p \in \Omega $ satisfying 
$I_{\p ,i}\ne \emptyset $, 
$I_{\p ,j}\ne \emptyset $ 
and $\min _{\p }(\supp \phi _i(R))=\zero $ 
is not empty, 
then 
$\# \{ \rinp \mathscr{A}\mid \p \in \Omega '\} =\# \R$. 
The proof of this result is rather long, 
and it will appear in our next paper.

\end{document}